\colorlet{prettygreen}{ForestGreen!60!LimeGreen}
\def\@defaultbiblabelstyle#1{[#1]}
\tikzset{vtx/.style={circle, fill, inner sep=1.5pt}}
\tikzset{openvtx/.style={circle, draw, inner sep=1.5pt}}
\newtheorem{theorem}{Theorem}[section]
\newtheorem{lemma}[theorem]{Lemma}
\newtheorem{proposition}[theorem]{Proposition}
\newtheorem{corollary}[theorem]{Corollary}
\newtheorem*{claim*}{Claim}
\theoremstyle{definition}
\newtheorem{definition}[theorem]{Definition}
\newtheorem{question}[theorem]{Question}
\newtheorem{problem}[theorem]{Problem}
\theoremstyle{remark}
\newtheorem{remark}[theorem]{Remark}
\newenvironment{prop}{\begin{proposition}}{\end{proposition}}
\crefname{claim}{Claim}{Claims}
\newlist{homtenum}{enumerate}{1}
\setlist[homtenum,1]{leftmargin=36pt}
\newcommand{\N}{{\mathcal N}}
\newcommand{\Z}{{\mathbb Z}}
\newcommand{\pieven}{\pi_1^{\mathrm{EVEN}}}
\DeclareMathOperator{\id}{id}
\DeclareMathOperator{\Cay}{Cay}
\DeclareMathOperator{\wind}{wind}
\DeclareMathOperator{\squareop}{\square}
\title{Abelian groups without $3$-chromatic Cayley graphs}
\author{Mike Krebs}
\address{Department of Mathematics, California State University --- Los
Angeles}
\email{mkrebs@calstatela.edu}
\author{Maya Sankar}
\thanks{Sankar was supported by a Fannie and John Hertz Foundation Fellowship and NSF Graduate Research Fellowship DGE-1656518.}
\address{Department of Mathematics, Stanford University}
\email{mayars@stanford.edu}
\subjclass[2020]{05C25, 05C38}
\let\mytitle\@title
\let\myauthor\@author
\begin{document}

\begin{abstract}Let $G$ be an abelian group.  The main theorem of this paper asserts that there exists a Cayley graph on $G$ with chromatic number $3$ if and only if $G$ is not of exponent $1$, $2$, or $4$. For connected Cayley graphs, we also show that this theorem holds when $G$ is finitely generated.  
Although motivated by ideas from algebraic topology, our proof may be expressed purely combinatorially.
As a by-product, we derive a topological result which is of independent interest. Suppose $X$ is a connected non-bipartite graph, and let $\N(X)$ denote its neighborhood complex. We show that if the fundamental group $\pi_1(\N(X))$ or first homology group $H_1(\N(X))$ is torsion, then the chromatic number of $X$ is at least $4$. This strengthens a special case of a classical result of Lov\'asz, which derives the same conclusion if $\pi_1(\N(X))$ is trivial.  
\end{abstract}

\keywords{abelian group, chromatic number, fundamental group, neighborhood complex}

\maketitle

\section{Introduction}

Let $G$ be a group and $S$ a \emph{symmetric} subset of $G$---that is, $x\in S$ if and only if $x^{-1}\in S$. The \emph{Cayley graph} $\Cay(G,S)$ of $G$ with generating set $S$ is defined to be the graph on vertex set $G$ in which two vertices $x$ and $y$ are adjacent if and only if $x^{-1}y\in S$. 

Chromatic numbers of Cayley graphs have been extensively investigated \cite{Alon,Cze}, motivated in part by applications to information theory \cite{AlOr,Rix}.
Special focus \cite{Kim, Kok, Lau, Ngo, Zie} has been given to the case of \emph{binary Cayley graphs}, also called \emph{cube-like graphs}, which are Cayley graphs over the group $G=(\Z/2\Z)^m$. Here, a somewhat surprising result of Payan \cite{Pay} shows that binary Cayley graphs cannot have chromatic number 3.

\begin{theorem}[Payan \cite{Pay}]\label{thm:Payan}
If $G=(\Z/2\Z)^m$ and $S$ is a symmetric subset of $G$, then $\chi(\Cay(G,S))\neq 3$.
\end{theorem}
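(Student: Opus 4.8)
The plan is to reduce in two stages to a single well-understood family of graphs and then invoke a topological lower bound for the chromatic number. Since the connected components of $\Cay(G,S)$ are each isomorphic to $\Cay(\langle S\rangle,S)$, I may assume that $S$ generates $G$ and that $X:=\Cay(G,S)$ is connected; as $\chi(X)\in\{1,2\}$ is not in dispute, and $0\in S$ would make $X$ non-colourable, I may further assume $0\notin S$ and $\chi(X)=3$. Then $X$ is non-bipartite and carries a graph homomorphism $X\to C_3$, i.e.\ a proper $3$-colouring $c\colon G\to\Z/3\Z$. Over $(\Z/2\Z)^m$, non-bipartiteness says exactly that some $T\subseteq S$ of odd cardinality satisfies $\sum_{t\in T}t=0$; I fix such a $T$ with $|T|=\ell$ minimal, so $\ell\ge 3$ is the odd girth of $X$.

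In the second stage I would observe that a minimal $T$ has no proper subrelation: if $\varnothing\neq A\subsetneq T$ and $\sum_{t\in A}t=0$, then either $|A|$ is odd, contradicting minimality, or $|A|$ is even and $T\setminus A$ is a shorter odd relation. Hence the only $\F_2$-linear dependences among the elements of $T$ are the trivial one and $\sum_{t\in T}t=0$, so $\langle T\rangle\cong(\Z/2\Z)^{\ell-1}$ with $T$ corresponding to $\{e_1,\dots,e_{\ell-1},\,e_1+\dots+e_{\ell-1}\}$. Keeping all $|G|$ vertices but only the edges labelled by elements of $T$ exhibits, as a spanning subgraph of $X$, a disjoint union of copies of the folded cube
\[
\squareop Q_\ell \;:=\; \Cay\!\bigl((\Z/2\Z)^{\ell-1},\,\{e_1,\dots,e_{\ell-1},\,e_1+\dots+e_{\ell-1}\}\bigr).
\]
Therefore $\chi(X)\ge\chi(\squareop Q_\ell)$, and it remains to prove $\chi(\squareop Q_\ell)\ge 4$ for every odd $\ell\ge 3$.

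This last point is the heart of the argument, and where I expect the real obstacle to lie. For $\ell=3$ it is trivial, since $\squareop Q_3\cong K_4$; and $\squareop Q_5$ is the Clebsch graph. For general odd $\ell$ I would bound $\chi(\squareop Q_\ell)$ from below via the topology of the neighbourhood complex $\N(\squareop Q_\ell)$ — equivalently, of the box complex $\mathrm{Hom}(K_2,\squareop Q_\ell)$ viewed as a free $\Z/2\Z$-space: the goal is to show it has $\Z/2\Z$-index at least $2$ (admits no $\Z/2\Z$-equivariant map to $S^1$), so that the Lov\'asz-type bound $\chi\ge\mathrm{ind}_{\Z/2\Z}+2$ gives $\chi(\squareop Q_\ell)\ge 4$. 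The difficulty is to control the connectivity (or $\Z/2\Z$-index) of $\N(\squareop Q_\ell)$ uniformly in $\ell$, not merely for small $\ell$. Two alternatives to the general topological computation are Payan's original route, which extracts the contradiction from a parity analysis of the putative $3$-colouring directly on the folded cube, and — in the spirit of the present paper — verifying that $H_1(\N(\squareop Q_\ell))$ is a finite $2$-group and then appealing to the strengthening of Lov\'asz's theorem proved later on (taking care that this not be invoked circularly).
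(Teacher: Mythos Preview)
Your reduction to the folded cube $\squareop Q_\ell$ is correct and is precisely the classical route of \cite{Pay,Bea}, which the paper explicitly contrasts with its own approach. The step you flag as the crux is fillable---indeed, \cref{prop:pi1-for-abelian-Cayley-graph} gives $\pi_1(\squareop Q_\ell)\cong\Z/2\Z$ with $\pieven(\squareop Q_\ell)$ trivial, so already the $k=1$ case of Lov\'asz's \cref{thm:lovasz} applies, and there is no circularity in your last alternative since \cref{thm:torsion-implies-chi4} and \cref{thm:homology} are proved independently of \cref{thm:Payan}. The paper's route is different: it never passes to a subgraph but applies \cref{prop:pi1-for-abelian-Cayley-graph} directly to the full graph $\Cay(G,S)$, observes that $s=-s$ forces $2\cdot s=1\cdot s+1\cdot(-s)\in H[S]$ so that $\pi_1(\Cay(G,S))$ is $2$-torsion, and then invokes the winding-number argument of \cref{thm:torsion-implies-chi4}. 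What this buys is uniformity and generality---the same short computation shows $4\cdot s\in H[S]$, hence $\pi_1$ is $4$-torsion, whenever $\exp(G)\mid 4$, immediately yielding the full \cref{thm:chi3iff}---whereas your folded-cube reduction uses the identity $s=-s$ essentially (to turn a minimal odd closed walk into a \emph{subset} $T\subseteq S$ with no proper subrelation) and does not extend to exponent $4$.
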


Several alternate proofs of \cref{thm:Payan} appear in the literature \cite{Bea, Cer}, and a proof of a special case was given by Sokolov\'a \cite{Sok} five years prior. Both \cite{Pay,Bea} rely on the fact---shown combinatorially by Payan, but proven seven years prior by Stiebitz \cite{Sti} using topological methods---that a generalized Mycielskian of an odd cycle always has chromatic number 4. The use of the generalized Mycielskian construction suggests that \cref{thm:Payan} might be a fundamentally topological result, as topological methods have proven key in studying the chromatic number of generalized Mycielskians more generally \cite{GyJeSt,Mul}. In \cite{Cer}, Cervantes and the first author wondered whether this topological connection could be made more explicit. 

This paper provides a complete characterization of the abelian groups $G$ over which there exists a 3-chromatic Cayley graph $\Cay(G,S)$. 
Our proof studies the topological structure of the \emph{entire} Cayley graph---thus answering the question from \cite{Cer} and deriving a fundamentally new proof of \cref{thm:Payan}. In contrast, \cite{Bea,Cer,Pay} each prove \cref{thm:Payan} by locating a subgraph of the binary Cayley graph which was previously known to not be 3-colorable.
Our characterization relies on the following quantity. 
Given a group $G$, its \emph{exponent} $\exp(G)$ is defined to be the least positive integer $k$ such that $g^k=\id$ for all $g\in G$; we set $\exp(G)=0$ if no such $k$ exists. 

\begin{theorem}\label{thm:chi3iff}
    Let $G$ be an abelian group. There exists a symmetric subset $S$ of $G$ such that $\chi(\Cay(G,S))=3$ if and only if $\exp(G)\notin\{1,2,4\}$.
\end{theorem}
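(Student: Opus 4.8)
The plan is to handle the two implications separately, with the forward direction resting on a topological criterion I would prove along the way. First, the reformulation: $\exp(G)\notin\{1,2,4\}$ holds exactly when $G$ has an element of odd prime order or an element of order $8$. Indeed, if $G$ has an element of order divisible by an odd prime $p$ then a suitable power of it has order $p$; otherwise every element has order a power of $2$, and since $\exp(G)\notin\{1,2,4\}$ these orders are unbounded or include some $2^k$ with $k\ge 3$, so a suitable power has order $8$; the converse is immediate. Granting this, the ``if'' direction is elementary: if $h$ has odd prime order $p$, take $S=\{h,h^{-1}\}$, so $\Cay(G,S)$ is a disjoint union of copies of the odd cycle $C_p$ and $\chi=3$; if $h$ has order $8$, take $S=\{h,h^{-1},h^4\}$, so $\Cay(G,S)$ is a disjoint union of copies of $\Cay(\Z/8\Z,\{\pm1,4\})$ (the Wagner graph), which contains a $5$-cycle and admits an explicit proper $3$-colouring, so $\chi=3$. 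Here I use that $\chi(\Cay(G,S))=\chi(\Cay(\langle S\rangle,S))$, since distinct cosets of $\langle S\rangle$ induce no edges.

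For the ``only if'' direction, suppose $\exp(G)\in\{1,2,4\}$; the case $\exp(G)=1$ is trivial, so assume $\exp(G)\in\{2,4\}$ and, for contradiction, $\chi(\Cay(G,S))=3$. Then $\Cay(G,S)$ is non-bipartite, hence contains an odd cycle; the finitely many group elements appearing as edge-steps of that cycle generate a finite subgroup $H$ with $\exp(H)\mid 4$, and $\Cay(H,S\cap H)$ is an induced subgraph containing the cycle, so it has chromatic number $3$. Passing to the component containing the cycle, I may assume $G$ is finite and $X:=\Cay(G,S)$ is connected and non-bipartite with $\chi(X)=3$ and $\exp(G)\mid 4$. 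The contradiction then reduces to two assertions: (i) if $X$ is connected and non-bipartite and $H_1(\N(X);\Z)$ is torsion, then $\chi(X)\ge 4$; and (ii) if $G$ is abelian with $\exp(G)\mid 4$ and $\Cay(G,S)$ is connected and non-bipartite, then $H_1(\N(\Cay(G,S));\Z)$ is torsion. (Since the abelianization of a torsion group is torsion, (i) subsumes the version with $\pi_1$; and (ii) for $\exp(G)=2$ reproves \cref{thm:Payan}, though one could also just invoke it there.)

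I would prove the topological criterion (i) as follows. The neighborhood complex is functorial for graph homomorphisms; $\N(K_3)$ is a circle; and for odd $n$, $\N(C_n)$ is again an $n$-cycle, hence a circle. A proper $3$-colouring of $X$ is a graph homomorphism $X\to K_3$ and an odd cycle in $X$ is a graph homomorphism $C_n\to X$, so composing and applying $\N$ yields a map of circles $S^1\cong\N(C_n)\to\N(X)\to\N(K_3)\cong S^1$, namely $\N$ applied to a proper $3$-colouring of $C_n$. Any graph homomorphism between odd cycles has odd degree, and applying $\N$ changes the degree only by a sign (a short direct check under the identifications $\N(C_m)\cong C_m$), so the composite has nonzero degree; therefore the image of the fundamental class of $\N(C_n)$ in $H_1(\N(X);\Z)$ has infinite order, and $H_1(\N(X);\Z)$ is not torsion. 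This is the contrapositive of (i), and it reduces to Lovász's theorem when $\pi_1(\N(X))$ is trivial.

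The main obstacle is (ii): controlling $H_1$ of the neighborhood complex of a Cayley graph on an exponent-$4$ abelian group. I would start from the concrete description that the $1$-skeleton of $\N(\Cay(G,S))$ is the Cayley graph $\Cay\big(G,\{s_1s_2:s_1,s_2\in S\}\setminus\{\id\}\big)$, that the maximal simplices are the translates $gS$ ($g\in G$), and that $G$ acts freely. The plan is to combine this with $\exp(G)\mid 4$ --- crucially, that both the commutation relations $s_1s_2=s_2s_1$ and the fourth-power relations $s^4=\id$ among elements of $S$ correspond to closed walks of length $4$ in this complex --- to show that every $1$-cycle is, modulo boundaries of the triangular faces and modulo torsion, a combination of such length-$4$ walks; equivalently, that $\pi_1(\N(\Cay(G,S)))$ is torsion, which I expect to come out as an elementary abelian $2$-group assembled from $G/2G$ and $2G$. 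The exponent hypothesis is essential here, since (ii) fails for $\Z/8\Z$: the neighborhood complex of the Wagner graph is connected with Euler characteristic $8-16+8=0$, so its $H_1$ has positive rank. Executing this $G$-equivariant homological analysis --- in effect, choosing compatible local primitives for a rational $1$-cocycle across the overlapping translates $gS$ and bounding the obstruction to gluing them together --- is where I expect the real difficulty to lie.
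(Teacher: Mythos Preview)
Your reformulation in the ``if'' direction has a gap: the claim that $\exp(G)\notin\{1,2,4\}$ is equivalent to $G$ containing an element of odd prime order or of order $8$ fails whenever $G$ has elements of infinite order but no suitable torsion --- e.g.\ $G=\Z$ or $G=\Z\times\Z/2\Z$. The paper treats the infinite-order case separately, taking $S=\{x^{\pm1},x^{\pm2}\}$ for an element $x$ of infinite order. Your argument for (i), via the degree of the composite $\N(C_n)\to\N(X)\to\N(K_3)$, is correct and is essentially the paper's winding-number argument in more topological language; the paper in fact states and proves exactly your (i) as a standalone result.

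The substantive gap is (ii), which you leave as a plan, and here the paper's execution is far cleaner than the $G$-equivariant analysis of the overlapping maximal simplices $gS$ that you anticipate. Rather than attacking $\pi_1(\N(X))$ through the simplicial structure of $\N$, the paper defines a discrete fundamental group $\pi_1(X)$ via walk homotopy in $X$ itself (closed walks modulo local substitutions and insertion/deletion of backtracks), with $\pi_1(\N(X))\cong\pieven(X)$ sitting as an index-$2$ subgroup. For abelian Cayley graphs there is then an explicit isomorphism $\pi_1(\Cay(G,S))\cong R[S]/H[S]$, where $R[S]\subset\Z[S]$ is the kernel of the natural map $\Z[S]\to G$ and $H[S]$ is generated by the elements $1\cdot s+1\cdot(-s)$ together with $1\cdot s_1+1\cdot s_2-1\cdot s_3-1\cdot s_4$ whenever $s_1+s_2=s_3+s_4$ in $G$. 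The exponent-$4$ hypothesis now enters in a single line: from $4s=0$ one gets $s+s=(-s)+(-s)$, so $2\cdot s-2\cdot(-s)\in H[S]$, and adding $2\bigl(1\cdot s+1\cdot(-s)\bigr)$ gives $4\cdot s\in H[S]$ for every $s$; hence $R[S]/H[S]$ is $4$-torsion. This is precisely the ``length-$4$ relation coming from $s^4=\id$'' that you were reaching for, but packaged so that the whole verification is three lines rather than a gluing argument over the cover $\{gS\}_{g\in G}$. Incidentally, no reduction to finite $G$ is needed; the argument works uniformly.
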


Our methods also prove a partial generalization of a groundbreaking 1978 result (\cref{thm:lovasz}) of Lov\'asz \cite{Lov}, which he used to prove the Kneser conjecture. He defined the \emph{neighborhood complex} of a graph $X$ to be a simplicial complex $\N(X)$ with vertex set $V(X)$, whose faces are exactly those sets $W\subset V(X)$ which have a common neighbor. (To avoid confusion, we always use the variable $X$ for graphs, reserving the variable $G$ for groups.) In one of the first results in topological combinatorics, he showed that the chromatic number of a graph can be lower-bounded in terms of the homotopical connectivity of its neighborhood complex. A topological space $Y$ is defined to be \emph{$k$-connected} if $Y$ is path-connected and its first $k$ homotopy groups $\pi_1(Y),\ldots,\pi_k(Y)$ are trivial---or equivalently, if $Y$ is path-connected and for $1\leq d\leq k$, every continuous map $\mathbb S^d\to Y$ extends continuously to a map on the disk $\mathbb D^d$.

\begin{theorem}[Lov\'asz \cite{Lov}]\label{thm:lovasz}
Let $X$ be a graph. If $\N(X)$ is $k$-connected then $\chi(X)\geq k+3$.	
\end{theorem}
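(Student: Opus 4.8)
\medskip
\noindent\emph{Proof idea.} The plan is to route the argument through the Borsuk--Ulam theorem, which first requires replacing $\N(X)$, which carries no symmetry of its own, by a homotopy-equivalent complex that does carry a free $\Z/2$-action. We may assume $X$ has an edge and that $n:=\chi(X)$ is finite, the cases $\chi(X)=\infty$ and $E(X)=\emptyset$ being trivial. It then suffices to prove that $\N(X)$ is \emph{not} $(n-2)$-connected: from this, $k$-connectedness of $\N(X)$ forces $k\le n-3$, i.e.\ $\chi(X)\ge k+3$.

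First I would introduce the box complex $\mathsf B(X)$: the polyhedral complex whose cells are the pairs $(A,B)$ of disjoint nonempty subsets of $V(X)$ with $A\times B\subseteq E(X)$, ordered by coordinatewise inclusion, carrying the free $\Z/2$-action $(A,B)\mapsto(B,A)$. The key structural step is that $\mathsf B(X)$ is (non-equivariantly) homotopy equivalent to $\N(X)$. I would prove this by applying Quillen's fiber lemma to the poset map $(A,B)\mapsto A$ from the cell poset of $\mathsf B(X)$ to the face poset of $\N(X)$ (here $A$ is a face of $\N(X)$ because every vertex of $B$ is a common neighbor of $A$): the fiber over the principal up-set of a face $A$ is the set of admissible $(A',B')$ with $A'\supseteq A$, and the map $(A',B')\mapsto(A,B')$ exhibits a deformation retraction of it onto the face poset of the simplex on $\bigcap_{a\in A}N(a)$, which is nonempty (as $A$ is a face) and hence contractible. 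Consequently $\mathsf B(X)$ and $\N(X)$ have the same connectivity, so it is enough to show $\mathsf B(X)$ is not $(n-2)$-connected.

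Next I would bring in the coloring and conclude. A proper $n$-coloring is a graph homomorphism $c\colon X\to K_n$, and $(A,B)\mapsto(c(A),c(B))$ is a well-defined $\Z/2$-equivariant cellular map $\mathsf B(X)\to\mathsf B(K_n)$ --- properness of $c$ is exactly what keeps $c(A)$ and $c(B)$ disjoint. A direct computation identifies $\mathsf B(K_n)$, up to $\Z/2$-homotopy equivalence, with the sphere $\mathbb S^{n-2}$ under the antipodal action, so composing gives a $\Z/2$-map $\mathsf B(X)\to\mathbb S^{n-2}$. Now I would invoke the equivariant Borsuk--Ulam theorem in the form $\operatorname{coind}_{\Z/2}(Y)\le\operatorname{ind}_{\Z/2}(Y)$ for every free $\Z/2$-space $Y$, together with the standard fact that an $m$-connected free $\Z/2$-complex admits a $\Z/2$-map from $\mathbb S^{m+1}$ (build it skeleton by skeleton: the obstruction to extending over an antipodal pair of $i$-cells lies in $\pi_{i-1}(Y)$, which vanishes for $i\le m+1$). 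Were $\mathsf B(X)$ $(n-2)$-connected, this would yield $\operatorname{coind}_{\Z/2}(\mathsf B(X))\ge n-1>n-2\ge\operatorname{ind}_{\Z/2}(\mathsf B(X))$, a contradiction; hence $\N(X)$ is not $(n-2)$-connected, as needed.

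I expect the homotopy equivalence $\mathsf B(X)\simeq\N(X)$ to be the crux --- it is the step that turns the symmetry-free hypothesis on $\N(X)$ into something the Borsuk--Ulam machinery can detect --- alongside the standard but fiddly equivariant-obstruction bookkeeping in the last step; the induced map on box complexes and the identification $\mathsf B(K_n)\simeq_{\Z/2}\mathbb S^{n-2}$ are routine.
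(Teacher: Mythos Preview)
The paper does not supply its own proof of this statement: \cref{thm:lovasz} is quoted as Lov\'asz's 1978 result and used as background, so there is nothing in the paper to compare your argument against line by line.

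That said, your outline is a correct and standard modern proof. The reduction ``$\N(X)$ not $(n-2)$-connected'' is the right contrapositive; the box complex $\mathsf B(X)$ with both shores nonempty is indeed homotopy equivalent to $\N(X)$, and your Quillen-fiber argument for this is sound (the fiber over $A$ retracts to the face poset of the simplex on $\bigcap_{a\in A}N(a)$, which has a maximum and is contractible). The functoriality step $\mathsf B(X)\to\mathsf B(K_n)$ and the identification $\mathsf B(K_n)\simeq_{\Z/2}\mathbb S^{n-2}$ are standard, and the final Borsuk--Ulam contradiction via $\operatorname{coind}\le\operatorname{ind}$ is exactly right. This is essentially the Matou\v{s}ek--Ziegler repackaging of Lov\'asz's argument; Lov\'asz's original 1978 proof did not use box-complex language but worked directly with $\N(X)$ and an auxiliary antipodality construction, though the underlying Borsuk--Ulam mechanism is the same. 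Your version is cleaner to state and generalizes more readily, at the cost of importing the $\mathsf B(X)\simeq\N(X)$ equivalence as a separate lemma.
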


Lov\'asz asked whether \cref{thm:lovasz} could be strengthened by considering homology instead of homotopy. We provide an affirmative answer to this question in the case $k=1$. We show that it suffices for the first homology group $H_1(\N(X))$ to be \emph{torsion}, meaning that every element of $H_1(\N(X))$ has finite order. As we remark below, this is a strictly weaker condition than the fundamental group $\pi_1(\N(X))$ being torsion, which is in turn weaker than $\N(X)$ being 1-connected. 
To our knowledge, this is the first result that weakens the condition in \cref{thm:lovasz} on the topological structure of $\N(G)$ for any $k$.

\begin{theorem}\label{thm:homology}
Let $X$ be a connected non-bipartite graph. If $H_1(\N(X))$ is torsion then $\chi(X)\geq 4$.	
\end{theorem}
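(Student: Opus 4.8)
The plan is to establish the contrapositive: if $X$ is connected, non-bipartite, and $\chi(X)\le 3$, then $H_1(\N(X))$ contains an element of infinite order. The starting point is that the neighborhood complex is functorial for graph homomorphisms into loopless graphs. Indeed, if $f\colon X\to Y$ is a homomorphism and $Y$ has no loops, then $f$ maps no edge to a loop, so it identifies no two adjacent vertices; consequently, whenever $W\subseteq V(X)$ has a common neighbor $v$, the image $f(W)$ has the common neighbor $f(v)$, and $v\mapsto f(v)$ defines a simplicial map $\N(f)\colon\N(X)\to\N(Y)$ (which may collapse some simplices). Since $X$ is non-bipartite it contains an odd cycle $C_q\subseteq X$, and since $\chi(X)\le 3$ there is a homomorphism $c\colon X\to K_3$; applying $\N$ to
\[
C_q\ \hookrightarrow\ X\ \xrightarrow{\ c\ }\ K_3
\]
yields simplicial maps $\N(C_q)\to\N(X)\xrightarrow{\N(c)}\N(K_3)$.

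The next step is to identify the two ends. A direct computation shows $\N(K_3)=\partial\Delta^2\cong\mathbb S^1$, and, for odd $q\ge 3$, that $\N(C_q)$ is the $1$-dimensional complex on $\Z/q\Z$ having an edge $\{i,i+2\}$ for each $i$; as $q$ is odd this complex is itself a $q$-cycle, so $\N(C_q)\cong\mathbb S^1$ and $H_1(\N(C_q))\cong\Z$ is generated by the closed walk $0,2,4,\dots,2(q-1),0$. The heart of the argument is the claim that the composite $\N(C_q)\to\N(X)\to\N(K_3)$, which equals $\N(c|_{C_q})$ for the homomorphism $c|_{C_q}\colon C_q\to K_3$, induces multiplication by an \emph{odd} integer on $H_1(-;\Z)\cong\Z$; in particular this composite is nonzero on $H_1$. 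Granting the claim, the composite $H_1(\N(C_q))\to H_1(\N(X))\to H_1(\N(K_3))$ is nonzero, so the image of $H_1(\N(X))\to H_1(\N(K_3))\cong\Z$ is a nonzero subgroup of $\Z$; hence $H_1(\N(X))$ surjects onto an infinite cyclic group and therefore contains an element of infinite order, so it is not torsion. This completes the reduction.

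It remains to prove the odd-degree claim, which I expect to be the main obstacle. A homomorphism $h\colon C_q\to K_3$ is nothing but a proper $3$-coloring $a_0,a_1,\dots,a_{q-1}$ of the odd cycle (cyclically indexed). I would compute a winding number: viewing colors as elements of $\Z/3\Z$ and $\N(K_3)$ as a triangular model of the circle, the image under $\N(h)$ of the generating cycle of $\N(C_q)$ is the closed walk $a_0,a_2,a_4,\dots,a_{2(q-1)},a_0$ along the triangle; lifting its successive increments to $\{-1,0,1\}\subset\Z$ and summing gives $3\cdot\wind$, and one must show $\wind$ is odd. Comparing this step-$2$ walk with the ordinary coloring walk $a_0,a_1,a_2,\dots$ reduces the parity to two elementary facts: a proper $3$-coloring of an odd cycle has odd winding number (the $q$ forced increments $\pm 1$ sum to an integer that is divisible by $3$ and congruent to $q$ modulo $2$), and a cyclic sequence of signs has an even number of sign changes. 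Keeping these parities straight is the delicate part of the proof; the functoriality of $\N$ and the homotopy types of $\N(C_q)$ and $\N(K_3)$ are routine by comparison. (One can instead obtain the odd-degree claim abstractly: the homomorphisms above induce $\Z/2\Z$-equivariant maps of box complexes $B(C_q)\to B(X)\to B(K_3)$, the outer two are $\Z/2\Z$-homotopy equivalent to the antipodal circle, any equivariant self-map of which has odd degree, and $\N(X)\simeq B(X)$; but the winding-number route has the virtue of being elementary and self-contained.)
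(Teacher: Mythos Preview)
Your proposal is correct and takes a genuinely different route from the paper. The paper argues via its discrete fundamental group: it shows that the winding number $\wind(-,c)$ gives a homomorphism $\pi_1(X)\to\Z$, then uses the identification $\pieven(X)\cong\pi_1(\N(X))$ together with the universal property of abelianization to factor the restriction $\pieven(X)\to\Z$ through $H_1(\N(X))$; since any odd closed walk $W$ has $\wind(W*W,c)\ne 0$, this factored map cannot be zero, contradicting the torsion hypothesis. You instead bypass the discrete $\pi_1$ entirely and work directly in homology via functoriality of $\N$: you exhibit $\N(C_q)\to\N(X)\to\N(K_3)$ as a map of spaces and show the composite has odd degree on $H_1$. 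Both arguments bottom out in the same parity fact (an odd cycle has odd winding in $K_3$), but your packaging is more classically topological and self-contained, whereas the paper's proof is a short diagram chase that cashes in on machinery already built for the Cayley-graph application. The trade-off is that your degree computation happens on $\N(K_3)$ rather than on $X$, so you must compare the ``step-2'' winding of $a_0,a_2,a_4,\dots$ with the ordinary winding of $a_0,a_1,a_2,\dots$; your sign-change sketch is correct (for odd $q$ the pairs $(2j\bmod q,\,2j{+}1\bmod q)$ run over all consecutive pairs, so the number of zero increments is even, and in fact the step-2 winding equals $-w$), but you should write this out explicitly since it is, as you say, the delicate point. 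Your box-complex alternative is also valid and cleaner if one is willing to invoke that machinery.
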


To contextualize this result, let us note that $\N(X)$ is path-connected if and only if $X$ is connected and non-bipartite. Moreover, a standard result in algebraic topology \cite[Theorem 2A.1]{Hat} states that $H_1(\N(X))$ is the abelianization of $\pi_1(\N(X))$ in this case. Thus, $H_1(\N(X))$ is torsion or trivial whenever $\pi_1(\N(X))$ is torsion or trivial, respectively. It follows that \cref{thm:homology} is a strict strengthening of the $k=1$ case of \cref{thm:lovasz}.  Moreover, the following corollary of \cref{thm:homology} (which we prove separately in \cref{sec:obstructions}) is itself a strengthening of the $k=1$ case of \cref{thm:lovasz}.

\begin{corollary}\label{cor:pi1-torsion}
Let $X$ be a connected non-bipartite graph. If $\pi_1(\N(X))$ is torsion then $\chi(X)\geq 4$.	
\end{corollary}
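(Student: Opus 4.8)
The plan is to deduce \cref{cor:pi1-torsion} directly from \cref{thm:homology}, using only the standard comparison between the fundamental group and the first homology group of a path-connected space. First I would recall the observation already noted in the introduction: since $X$ is connected and non-bipartite, the neighborhood complex $\N(X)$ is path-connected. Consequently the Hurewicz theorem in degree $1$ applies---equivalently, \cite[Theorem 2A.1]{Hat}---and yields a natural isomorphism $H_1(\N(X))\cong\pi_1(\N(X))^{\mathrm{ab}}$, the abelianization of the fundamental group. (This is exactly the fact invoked in the discussion following \cref{thm:homology}.)

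Next I would invoke the elementary group-theoretic fact that every quotient of a torsion group is torsion: if every element of a group $\Gamma$ has finite order and $q\colon\Gamma\twoheadrightarrow\Gamma'$ is surjective, then any $q(\gamma)\in\Gamma'$ has order dividing that of $\gamma$, hence finite. Applying this with $\Gamma=\pi_1(\N(X))$ and $\Gamma'=\pi_1(\N(X))^{\mathrm{ab}}=\Gamma/[\Gamma,\Gamma]$, the hypothesis that $\pi_1(\N(X))$ is torsion gives that $H_1(\N(X))\cong\pi_1(\N(X))^{\mathrm{ab}}$ is torsion. Finally I would apply \cref{thm:homology} to conclude $\chi(X)\geq 4$.

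As for the main obstacle: there is essentially none beyond bookkeeping, and the corollary is a formal consequence of the more substantial \cref{thm:homology}. The only points that require any care are (i) verifying that $\N(X)$ is path-connected, so that $H_1$ is genuinely the abelianization of $\pi_1$ rather than something more delicate, and (ii) using ``torsion'' in the sense that every element has finite order, so that the property passes to quotients even though $\pi_1(\N(X))$ need not be finitely generated in general. No new idea is needed beyond the results already in hand.
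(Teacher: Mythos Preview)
Your proof is correct, but it takes a different route from the paper. You deduce \cref{cor:pi1-torsion} as a formal consequence of the stronger \cref{thm:homology}: since $H_1(\N(X))$ is the abelianization of $\pi_1(\N(X))$ and torsion passes to quotients, the hypothesis feeds directly into \cref{thm:homology}. The paper instead proves \cref{cor:pi1-torsion} \emph{before} and independently of \cref{thm:homology}, deriving it from the purely combinatorial \cref{thm:torsion-implies-chi4}. Concretely, the paper uses \cref{prop:pi1even} to identify $\pi_1(\N(X))\cong\pieven(X)$, then observes that for any closed walk $W$ the double $W*W$ lies in $\pieven(X)$ and hence has finite order; thus $\pi_1(X)$ itself is torsion, and \cref{thm:torsion-implies-chi4} applies. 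Your approach is shorter once \cref{thm:homology} is in hand, but it inverts the paper's logical dependencies: in the paper, the ideas in the proof of \cref{cor:pi1-torsion} motivate the diagram chase for \cref{thm:homology}, whereas you treat \cref{thm:homology} as a black box. The paper's route has the advantage of staying entirely within the discrete-homotopy framework and avoiding any appeal to homology or the Hurewicz theorem.
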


In the literature there are several competing notions of homotopy equivalence, fundamental group, etc.\ for graphs ({\it cf}.\ \cite{Mal}). The article \cite{San} discusses one such set of concepts particularly useful for our purposes. (These ideas seem to have been developed independently elsewhere as well; the articles \cite{matsushita} and \cite{wrochna} also introduce identical or equivalent definitions.) In particular, \cite{San} defines a discrete fundamental group $\pi_1(X)$ of a graph $X$ which is particularly easy to calculate for Cayley graphs over abelian groups $G$. (This $\pi_1(X)$ is denoted $\pi^2_1(X)$ in \cite{matsushita}.) Additionally, $\pi_1(X)$ contains $\pi_1(\N(X))$ as a subgroup. Thus, one direction of \cref{thm:chi3iff} follows by showing the discrete fundamental group $\pi_1(\Cay(G,S))$ is torsion and applying \cref{cor:pi1-torsion}.

The remainder of this paper is organized as follows.
In \cref{sec:3-chromatic} we show that \cref{thm:chi3iff} does not hold for groups of any other exponent. 
In \cref{sec:discrete} we introduce the preliminaries needed to prove non-3-chromaticity in \cref{thm:chi3iff}. Although topologically motivated, the objects introduced in this section are defined purely combinatorially and require no topological background to appreciate.
In \cref{sec:obstructions}, we prove a theorem that implies both \cref{thm:chi3iff} and \cref{cor:pi1-torsion} as corollaries, then use the same ideas to prove \cref{thm:homology}. 
We conclude in \cref{sec:concluding} with some open questions and additional remarks.

\section{Groups of Exponent Not Dividing $4$}\label{sec:3-chromatic}

Call a graph \emph{$k$-chromatic} if it has chromatic number exactly $k$. We first show that any group of exponent other than $1$, $2$, or $4$ admits a 3-chromatic Cayley graph.

\begin{lemma}\label{lem:3-chromatic}
Let $G$ be a group with $\exp(G)\notin\{1,2,4\}$. Then there exists a symmetric subset $S$ of $G$ such that $\chi(\Cay(G,S))=3$.
\end{lemma}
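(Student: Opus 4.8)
The plan is to construct explicit 3-chromatic Cayley graphs by reducing to a small number of cases based on the structure of $G$. Since $\exp(G) \notin \{1,2,4\}$, the group $G$ must contain an element $g$ whose order $n$ is either infinite, odd and at least $3$, or divisible by an odd prime, or equal to a power of $2$ that is at least $8$ --- more carefully, $G$ has an element of order $n$ where $n \notin \{1,2,4\}$ (if every element had order in $\{1,2,4\}$, then $\exp(G)$ would divide $4$). So first I would reduce to the cyclic case: it suffices to find a symmetric $S_0$ in the cyclic subgroup $\langle g \rangle \cong \Z/n\Z$ (or $\Z$) with $\chi(\Cay(\langle g\rangle, S_0)) = 3$, and then take $S$ to be $S_0$ together with possibly one more coset-related generator to ensure connectivity does not inflate the chromatic number --- actually the cleanest approach is: set $S = S_0 \cup (G \setminus \langle g\rangle)$? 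No — that risks raising the chromatic number. The right move is to observe that $\Cay(G, S_0)$ is a disjoint union of copies of $\Cay(\langle g\rangle, S_0)$, so $\chi(\Cay(G,S_0)) = \chi(\Cay(\langle g \rangle, S_0))$ exactly. Thus the problem genuinely reduces to cyclic (and infinite cyclic) groups.

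For the cyclic cases I would exhibit generating sets directly. For $G = \Z$, take $S = \{\pm 1, \pm 2\}$; then $\Cay(\Z, S)$ contains a triangle (so $\chi \geq 3$) and is $3$-colorable by coloring $i \mapsto i \bmod 3$ since $\pm 1, \pm 2 \not\equiv 0 \pmod 3$. For $G = \Z/n\Z$ with $n \geq 3$ and $3 \mid n$, the same coloring $i \mapsto i \bmod 3$ works with $S = \{\pm 1\}$ if $n \ge 4$, giving an odd... wait, $\Cay(\Z/n\Z, \{\pm1\})$ is an $n$-cycle, which is $3$-chromatic iff $n$ is odd. So: if $n$ is odd and $n \geq 3$, take $S = \{\pm 1\}$, an odd cycle. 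If $n$ is even, I need a different construction; here is where the real work lies. Writing $n = 2^a m$ with $m$ odd, either $m \geq 3$ (so $G$ has an element of odd order $\geq 3$, handled already via a subgroup) or $m = 1$ and $n = 2^a$ with $a \geq 3$. So the one essential remaining case is $G = \Z/2^a\Z$ with $a \geq 3$, i.e. $n \in \{8, 16, 32, \dots\}$.

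For $G = \Z/n\Z$ with $n = 2^a$, $a \ge 3$, I would look for a symmetric $S$ admitting a proper $3$-coloring with no monochromatic edge and containing an odd closed walk. A natural candidate: $S = \{\pm 1, \pm (n/2 - 1)\}$ or $S = \{\pm 2, \pm(n/2 \pm 1)\}$ type sets; one wants a $3$-coloring $c : \Z/n\Z \to \Z/3\Z$, which essentially means choosing a function whose "difference set" avoids $S$, together with $S$ generating an odd cycle. I expect the cleanest is to take $S$ so that $\Cay(\Z/n\Z, S)$ retracts onto an odd cycle or a known $3$-chromatic graph; for instance, with $n = 8$ one can check $S = \{\pm 1, \pm 3\}$ gives a $3$-chromatic graph by direct inspection, and then handle larger $n$ by a lifting/blow-up argument. \textbf{The main obstacle} is precisely this $2$-power case: unlike odd cycles, there is no one-line construction, and one must verify both a clique-type lower bound (or odd-girth obstruction) showing $\chi \ge 3$ and exhibit an explicit valid $3$-coloring showing $\chi \le 3$, uniformly in $a$. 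I would resolve it by giving a single parametrized generating set --- likely built from $\pm 1$ and an element near $n/2$ --- and checking the $3$-coloring $i \mapsto \lfloor 3i/n \rfloor$-type or residue-type formula works for all $a \geq 3$, with the non-bipartiteness coming from an explicit odd cycle through $0$.
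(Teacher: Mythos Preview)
Your overall strategy coincides with the paper's: reduce to a cyclic subgroup $\langle g\rangle$ so that $\Cay(G,S_0)$ is a disjoint union of copies of $\Cay(\langle g\rangle,S_0)$, then handle the infinite-cyclic case with $S=\{\pm1,\pm2\}$ and the odd-order case with $S=\{\pm1\}$. The paper does exactly this.

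Where you diverge is in the $2$-power case, and here there are two concrete issues. First, you are making the problem harder than necessary: if $G$ contains an element of order $2^a$ with $a\ge 3$, then it contains an element of order exactly $8$ (raise to the $2^{a-3}$ power), so you only need a single example for $\Z/8\Z$, not a uniform construction for all $a\ge 3$. The paper exploits precisely this, taking $S=\{\pm1,4\}$ in $\Z/8\Z$ and exhibiting an explicit $3$-coloring together with the odd $5$-cycle $0,1,2,3,4,0$. Second, your proposed example $S=\{\pm1,\pm3\}$ in $\Z/8\Z$ is actually bipartite: in $\Z/8\Z$ one has $\{\pm1,\pm3\}=\{1,3,5,7\}$, which is exactly the set of odd residues, so every edge joins an even vertex to an odd one and $\chi=2$. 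So that candidate fails, and with it the ``lifting/blow-up'' plan built on it. Once you replace this with the single correct $\Z/8\Z$ example, your ``main obstacle'' evaporates and the proof is complete.
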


\begin{proof}
If $G$ contains an element $x$ of infinite order, let $S=\{x^{\pm 1},x^{\pm 2}\}$. The graph $\Cay(\langle x\rangle,S)$ is isomorphic to $\Cay(\Z,\{\pm 1,\pm2\})$ which is non-bipartite---as $\{0,1,2\}$ induces a 3-cycle---but 3-colorable by reduction modulo 3. Then $\Cay(G,S)$ is a disjoint union of copies of $\Cay(\langle x\rangle,S)$, hence 3-chromatic.

If $G$ is torsion then some $y\in G$ has order $m$ not dividing 4, or else $\exp(G)$ would divide 4. Because $4\nmid m$, either $8\mid m$ or $(2k+1)\mid m$ for some $k\geq 1$. In the former case, let $x=y^{m/8}$, which has order 8, and let $S=\{x^{\pm 1},x^4\}$. Then $\Cay(\langle x\rangle,S)$ is isomorphic to the graph $\Cay(\Z/8\Z,\{\pm 1,4\})$ pictured in \Cref{fig:Z8} which is non-bipartite and 3-colorable. 
\begin{figure}[h]
\begin{tikzpicture}[
	a/.style={vtx,draw=black,fill=red},
	b/.style={vtx,draw=black,fill=blue},
	c/.style={vtx,draw=black,fill=prettygreen}]
\foreach \vtx [count=\i] in {a,b,c,a,b,c,a,b} {
	\coordinate[\vtx] (v\i) at (45*\i:0.9);
}
\draw (v1) -- (v2) -- (v3) -- (v4) -- (v5) -- (v6) -- (v7) -- (v8) -- (v1);
\draw (v1) -- (v5) (v2) -- (v6) (v3) -- (v7) (v4) -- (v8);
\end{tikzpicture}
\caption{A 3-coloring of the graph $\Cay(\Z/8\Z,\{\pm 1,4\})$}\label{fig:Z8}
\end{figure}
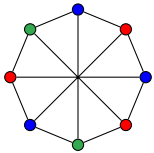
In the latter case, let $x=y^{m/(2k+1)}$ and $S=\{x^{\pm 1}\}$. Then $\Cay(\langle x\rangle,S)$ is isomorphic to $C_{2k+1}$ which is 3-chromatic. In either case, $\Cay(G,S)$ is a disjoint union of copies of the 3-chromatic graph $\Cay(\langle x\rangle,S)$.
\end{proof}

\begin{remark}
When $G$ is finitely generated, we can insist that the set $S$ in \cref{lem:3-chromatic} generates $G$, i.e., that the corresponding Cayley graph is connected --- see \cref{thm:connected-Cayley}.
\end{remark}

\begin{remark}
In the case that $G$ contains an element $y$ of even order $m\geq 6$, one could alternatively have taken $S=\{y^{\pm 1},y^{\pm 4}\}$. One can verify directly that $\Cay(\langle y\rangle,S)$ has chromatic number $3$; it also follows from a formula of Heuberger's for the chromatic number of a circulant graph with $2$ generators \cite{Heu}.
\end{remark}

\section{A discrete variant of the fundamental group}\label{sec:discrete}

In this section, we introduce the discrete fundamental group and the winding number, which are key to the proofs of our main results. Although these objects are topologically motivated, they are defined purely combinatorially, and require no topological prerequisites.

\subsection{The discrete fundamental group}\label{sec:discrete-pi1}
We now lay the foundation for our main results, beginning with a discretized analogue of the fundamental group for graphs, introduced in \cite{San}.

\begin{definition}\label{def:homotopy}
Let $X$ be a graph. A \emph{walk} in $X$ of \emph{length} $\ell$ is a sequence of vertices $v_0\cdots v_\ell$ such that $v_iv_{i+1}$ is an edge of $X$ for all $0\leq i<\ell$. We say two walks $W$ and $W'$ are \emph{homotopy equivalent} in $X$ if we may transform $W$ into $W'$ via a finite sequence of \emph{substitutions}, \emph{insertions}, and \emph{deletions}, defined as follows.
\begin{homtenum}
\item[(Sub)] Given a walk $v_0\cdots v_\ell$ and an index $0<i<\ell$, replace $v_i$ with $v'_i$ for some $v'_i\in N(v_{i-1})\cap N(v_{i+1})$. 
\item[(Ins)] Given a walk $v_0\cdots v_\ell$ and an index $0\leq i\leq \ell$, replace $v_i$ with $v_iwv_i$ for some $w\in N(v_i)$.
\item[(Del)] Given a walk $v_0\cdots v_\ell$ and an index $0<i<\ell$ satisfying $v_{i-1}=v_{i+1}$, replace $v_{i-1}v_iv_{i+1}$ with $v_{i-1}$.
\end{homtenum}
If a walk is homotopy equivalent to a length-$0$ walk, then we say it is \emph{null-homotopic}. 
\end{definition}

It is straightforward to verify that homotopy equivalence yields an equivalence relation on walks. If $X$ is a graph and $v_0$ is a vertex of $X$, we define the \emph{discrete fundamental group} $\pi_1(X,v_0)$ with \emph{base vertex} $v_0$ to be the set of all closed walks in $X$ beginning and ending at $v_0$, modulo homotopy equivalence. The group operation on $\pi_1(X,v_0)$ is given by $[W_1]*[W_2]:=[W_1*W_2]$, where $[W]$ denotes the homotopy equivalence class of a walk $W$, and $W_1*W_2$ denotes the concatenation of $W_1$ followed by $W_2$. 
Let $\pieven(X,v_0)$ be the subgroup of $\pi_1(X,v_0)$ consisting of all equivalence classes represented by walks of even length---this is well-defined because substitution, insertion, and deletion preserve the length of a walk modulo 2.  

For a connected graph $X$, the second author \cite{San} showed that $\pi_1(X,v_0)$ and $\pieven(X,v_0)$ are independent, up to group isomorphism, of the choice of $v_0$. Thus, we generally omit the base vertex $v_0$ and simply write $\pi_1(X)$ or $\pieven(X)$, much like the analogous notation for the fundamental group in algebraic topology. 
The discrete fundamental group also has connections to the neighborhood complex $\N(X)$, which we recall is the simplicial complex on $V(X)$ whose faces are subsets $W\subset V(X)$ with a common neighbor in $X$. The second author constructed an isomorphism between $\pieven(X)$ and $\pi_1(\N(X))$ when $\N(X)$ is connected---which occurs if and only if $X$ is connected and non-bipartite.
\begin{prop}[{\cite[Prop. 6.3]{San}}]\label{prop:pi1even}Let $X$ be a connected non-bipartite graph.  Then \[\pieven(X)\cong \pi_1(\mathcal{N}(X)).\]
\end{prop}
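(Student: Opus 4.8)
The goal is to show $\pieven(X)\cong\pi_1(\N(X))$ for a connected non-bipartite graph $X$. Since this is cited as a result from \cite{San}, I will describe the proof strategy one would use to establish it directly.

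The plan is to construct an explicit map between the two groups and check it is a well-defined isomorphism. First I would fix a base vertex $v_0\in V(X)$, which is also a vertex of $\N(X)$. The key idea is that an even-length closed walk $v_0v_1v_2\cdots v_{2k}=v_0$ in $X$ naturally yields a closed edge-path in $\N(X)$: each pair $v_{2i}, v_{2i+2}$ has the common neighbor $v_{2i+1}$ in $X$, so $\{v_{2i},v_{2i+2}\}$ is an edge (1-simplex) of $\N(X)$. Thus the sequence $v_0v_2v_4\cdots v_{2k}$ of even-indexed vertices is an edge-path in the $1$-skeleton of $\N(X)$, and this assignment should descend to a homomorphism $\Phi\colon\pieven(X,v_0)\to\pi_1(\N(X),v_0)$, using the standard edge-path group presentation of $\pi_1$ of a simplicial complex. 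Concatenation of walks in $X$ clearly corresponds to concatenation of edge-paths, so $\Phi$ respects the group operation.

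Next I would verify that $\Phi$ is well-defined, i.e., that homotopy-equivalent even walks in the sense of \cref{def:homotopy} map to edge-path-homotopic loops in $\N(X)$. This requires checking the three moves (Sub), (Ins), (Del) — but one must be careful about parity, tracking which $v_i$ have even index. A substitution at an even index $2i$ replaces $v_{2i}$ by some $v'_{2i}\in N(v_{2i-1})\cap N(v_{2i+1})$; then $v_{2i-2},v'_{2i},v_{2i+2}$ all lie in a common simplex with appropriate common neighbors, so the edge-path changes only within a simplex and is unchanged up to edge-path homotopy. A substitution at an odd index $2i+1$ does not even change the even-indexed subsequence, but one must check the common-neighbor witnesses still work: $v'_{2i+1}\in N(v_{2i})\cap N(v_{2i+2})$, so $\{v_{2i},v_{2i+2}\}$ is still an edge of $\N(X)$. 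Insertions and deletions change the length by $2$; an insertion $v_i\mapsto v_iwv_i$ with $w\in N(v_i)$ inserts either a backtrack $\cdots v_{2i}\, w\, v_{2i}\cdots$ (if $i$ is even — needs $w\in N(v_{2i})$, and one checks this gives an edge-path backtrack, killed by edge-path homotopy) or, if $i$ is odd, it does nothing to the even subsequence while preserving the witness structure. Deletions are handled symmetrically by reversing this analysis.

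Then I would construct the inverse map and show both composites are the identity, or alternatively argue directly that $\Phi$ is bijective. For surjectivity: every edge of $\N(X)$ is of the form $\{a,b\}$ with a common neighbor $c\in N(a)\cap N(b)$, so an edge-path $v_0=u_0, u_1,\dots,u_m=v_0$ in $\N(X)$ with chosen witnesses $c_j\in N(u_j)\cap N(u_{j+1})$ lifts to the even walk $u_0 c_0 u_1 c_1 u_2\cdots c_{m-1} u_m$ in $X$; one checks $\Phi$ sends this back to the given edge-path (up to homotopy), and that loops in $\N(X)$ using higher-dimensional simplices are already edge-path equivalent to ones in the $1$-skeleton (standard fact about simplicial edge-path groups). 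For injectivity: if an even walk in $X$ maps to a null-homotopic edge-path, the generating relations of the edge-path group (collapsing within $2$-simplices of $\N(X)$, i.e. triples $\{a,b,c\}$ with a common neighbor $d$) should each be realizable by a sequence of (Sub)/(Ins)/(Del) moves on the lifted walk, by inserting the common neighbor $d$ at the appropriate odd position and performing substitutions.

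The main obstacle I anticipate is the bookkeeping around parity and the choice of common-neighbor witnesses: the correspondence between walks in $X$ and edge-paths in $\N(X)$ is two-to-one at the vertex level (odd vs. even positions play different roles) and involves non-canonical choices of witnesses, so the bulk of the work is checking that none of these choices matter up to the respective homotopy relations, and that every generating relation of one group can be matched by generating moves of the other — in both directions. A secondary technical point is invoking the equivalence between the simplicial/edge-path fundamental group and the topological $\pi_1(|\N(X)|)$, and the reduction to the $1$- and $2$-skeleton, which are standard but should be cited (e.g., \cite[Section 3.6]{Hat} or a simplicial topology reference).
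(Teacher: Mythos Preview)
The paper does not prove \cref{prop:pi1even}; it is stated with a citation to \cite[Prop.~6.3]{San} and no proof is given in the present paper. You correctly noted this at the outset of your proposal.

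Your sketch is the natural approach and is essentially correct. The map $\Phi$ sending an even walk $v_0v_1\cdots v_{2k}$ to the edge-path $v_0v_2\cdots v_{2k}$ in the $1$-skeleton of $\N(X)$ is exactly the right construction, and the verification you outline---checking (Sub), (Ins), (Del) against the edge-path relations coming from $2$-simplices of $\N(X)$, then constructing an inverse by inserting common-neighbor witnesses at odd positions---is the standard route. One small imprecision: when you handle a substitution at an even index $2i$, the triple $\{v_{2i-2},v'_{2i},v_{2i+2}\}$ need not itself be a face; rather, both $\{v_{2i-2},v_{2i},v'_{2i}\}$ (common neighbor $v_{2i-1}$) and $\{v_{2i},v'_{2i},v_{2i+2}\}$ (common neighbor $v_{2i+1}$) are $2$-simplices, and two edge-path moves through these faces effect the replacement. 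This is a minor bookkeeping point and does not affect the validity of your strategy.
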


\subsection{The discrete fundamental group of an abelian Cayley graph} 
\label{sec:compute-pi1}
We provide a formula for the discrete fundamental group of a Cayley graph over an abelian group, as well as the fundamental group of its neighborhood complex. Let $G$ be an abelian group, written using additive notation, and let $S$ be a symmetric subset of $G$.

Let $\mathbb{Z}[S]$ be the free $\mathbb Z$-module on $S$. We write elements of $\mathbb{Z}[S]$ as formal linear combinations of elements of $S$ with coefficients in $\mathbb{Z}$. A typical element will be written in the form $a_1\cdot s_1+\cdots+ a_k\cdot s_k$, where $a_1,\dots,a_k\in\mathbb{Z}$ and $s_1,\dots,s_k\in S$.  Let $R[S]$ be the kernel of the natural homomorphism from $\mathbb{Z}[S]$ to $G$ given by $a_1\cdot s_1+\cdots+ a_k\cdot s_k\mapsto a_1 s_1+\cdots+ a_k s_k$.  That is, $R[S]$ is the \emph{group of relations} for $S$ in $G$. Let $H[S]$ be the subgroup of $R[S]$ generated by all elements of the form $1\cdot s+1\cdot (-s)$ for $s\in S$, as well as elements of the form $1\cdot s_1+1\cdot s_2-1\cdot s_3-1\cdot s_4$ for $s_1, s_2, s_3, s_4\in S$ such that $s_1+s_2=s_3+s_4$. The group $H[S]$ is the \emph{group of homotopy relations} for $S$.

\begin{proposition}\label{prop:pi1-for-abelian-Cayley-graph}
Suppose $G$ is an abelian group and $S\subset G$ a symmetric subset that generates $G$. Then $\pi_1(\Cay(G,S))\cong R[S]/H[S]$.
\end{proposition}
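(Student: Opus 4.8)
The plan is to construct an explicit isomorphism by sending a closed walk based at the identity to the formal sum of the generators used to traverse it. Fix the base vertex $v_0 = \id$. Given a closed walk $W = v_0 v_1 \cdots v_\ell$ with $v_0 = v_\ell = \id$, each step $v_i v_{i+1}$ corresponds to a unique generator $s_{i+1} := v_{i+1} - v_i \in S$, and since the walk is closed we have $s_1 + \cdots + s_\ell = 0$ in $G$, so $\Phi(W) := 1\cdot s_1 + \cdots + 1\cdot s_\ell$ lies in $R[S]$. I would first check that $\Phi$ is a well-defined group homomorphism from the set of closed walks (under concatenation) to $R[S]$: additivity under concatenation is immediate, and the empty walk maps to $0$.

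The heart of the argument is showing that $\Phi$ descends to an isomorphism $R[S]/H[S] \to \pi_1(\Cay(G,S))$, or rather its inverse. I would proceed in the following steps. \textbf{Step 1 (homotopy invariance, forward direction).} Show that if $W \sim W'$ then $\Phi(W) - \Phi(W') \in H[S]$. It suffices to check the three generating moves. A deletion $v_{i-1}v_i v_{i+1} \mapsto v_{i-1}$ with $v_{i-1} = v_{i+1}$ removes a pair of steps using generators $s$ and $-s$, changing $\Phi$ by $1\cdot s + 1\cdot(-s) \in H[S]$; insertions are the reverse. A substitution replacing $v_i$ by $v_i'$ changes the two relevant generators from $(s_i, s_{i+1}) = (v_i - v_{i-1}, v_{i+1} - v_i)$ to $(s_i', s_{i+1}') = (v_i' - v_{i-1}, v_{i+1} - v_i')$, and since $v_i' \in N(v_{i-1}) \cap N(v_{i+1})$ all four lie in $S$, while $s_i + s_{i+1} = s_i' + s_{i+1}' = v_{i+1} - v_{i-1}$; thus $\Phi$ changes by $1\cdot s_i + 1\cdot s_{i+1} - 1\cdot s_i' - 1\cdot s_{i+1}' \in H[S]$. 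Hence $\Phi$ induces a homomorphism $\bar\Phi \colon \pi_1(\Cay(G,S), \id) \to R[S]/H[S]$ (noting that $\pi_1$ here is identified with closed walks at $\id$ mod homotopy, valid since $G$ is connected). \textbf{Step 2 (surjectivity of $\bar\Phi$).} Every element $r = 1\cdot s_1 + \cdots + 1\cdot s_\ell$ of $R[S]$ (writing it with unit coefficients by repeating generators, and turning negative coefficients $-s$ into copies of $-s \in S$) satisfies $s_1 + \cdots + s_\ell = 0$, so the walk $v_0 v_1 \cdots v_\ell$ with $v_j = s_1 + \cdots + s_j$ is closed and maps to $r$. \textbf{Step 3 (injectivity of $\bar\Phi$).} This is the crux. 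I must show that if a closed walk $W$ at $\id$ has $\Phi(W) \in H[S]$, then $W$ is null-homotopic.

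For Step 3, the key observation is that a closed walk at $\id$ is determined up to reordering of its steps by the multiset of generators used — more precisely, I claim any two closed walks at $\id$ using the same multiset of generators are homotopy equivalent. Granting this claim, injectivity follows: if $\Phi(W) \in H[S]$, write $\Phi(W)$ as a sum of the generators $1\cdot s + 1\cdot(-s)$ and $1\cdot s_1 + 1\cdot s_2 - 1\cdot s_3 - 1\cdot s_4$ (with signs); I would track how building up this element from the zero relation corresponds to modifying a walk by insertions/deletions (for the $s, -s$ generators) and substitutions (for the four-term generators, which swap the multiset $\{s_1, s_2\}$ for $\{s_3, s_4\}$ in a walk precisely when $s_1 + s_2 = s_3 + s_4$), showing $W$ is homotopic to the trivial walk. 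The claim itself I would prove by induction on walk length: given two closed walks with the same step-multiset, use a substitution to reorder adjacent steps — swapping consecutive steps $s_i, s_{i+1}$ to $s_{i+1}, s_i$ is a substitution since $s_{i+1} - v_{i-1}$... wait, one needs the intermediate vertex $v_{i-1} + s_{i+1}$ to be adjacent to both neighbors, which holds as $s_{i+1} \in S$ and $(v_{i-1} + s_{i+1} + s_i) - (v_{i-1}+s_{i+1}) = s_i \in S$ — so adjacent transpositions of steps are homotopies, and any permutation is a product of these. I expect the main obstacle to be making the bookkeeping in Step 3 fully rigorous: carefully matching a generating expression for an element of $H[S]$ (which involves cancellation, so the "history" is not canonical) to an actual sequence of homotopy moves, including handling the case where intermediate partial sums require inserting and later deleting spurs. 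This can be organized cleanly by first reducing, via the reordering claim, to the statement that two closed walks at $\id$ are homotopic iff their step-multisets differ by a sequence of moves "add a canceling pair $\{s,-s\}$" and "replace $\{s_1,s_2\}$ by $\{s_3,s_4\}$ when $s_1+s_2 = s_3+s_4$" — which is exactly the equivalence relation on $R[S]$ generated by $H[S]$.
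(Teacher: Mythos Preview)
Your approach is correct and uses the same core observations as the paper: the reordering claim (adjacent transpositions of steps are substitutions, so the homotopy class of a closed walk at $\id$ depends only on its multiset of generators) and the correspondence between single homotopy moves and the generators of $H[S]$. The paper, however, runs the homomorphism in the opposite direction, defining $\phi\colon R[S]\to\pi_1(\Cay(G,S),0)$ by sending a relation---written with nonnegative coefficients by replacing each $a\cdot s$ with $a$ copies of $s$ (or $|a|$ copies of $-s$ when $a<0$)---to the class of the walk using that list of generators. Your reordering claim is exactly what makes $\phi$ a homomorphism; surjectivity is then immediate, and $\ker\phi=H[S]$ follows directly from your Step~1 analysis of how a single insertion, deletion, or substitution changes the formal sum. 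This completely bypasses the bookkeeping you anticipate in Step~3: you never need to lift an algebraic decomposition of an element of $H[S]$ back to an explicit chain of homotopy moves (with the attendant worries about spurs and intermediate multisets going negative), because computing the kernel of a surjection happens entirely on the source side. In effect, the paper's $\phi$ is the inverse of your $\bar\Phi$, and constructing it explicitly is the cleanest way to establish that $\bar\Phi$ is injective.
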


\begin{proof}
In this proof, it is convenient to denote a closed walk by the generators used to take each step rather than by the vertices visited. Given $s_1,\ldots,s_\ell\in S$, let $(v_0;s_1,\ldots, s_\ell)$ denote the walk $v_0\dots v_\ell$ of length $\ell$ with $v_i:=v_0+s_1+\cdots+s_i$, and let $[v_0;s_1,\ldots,s_\ell]$ denote the homotopy equivalence class of this walk. 
Observe that $(v_0;s_1,\dots, s_\ell)$ is homotopy equivalent to $(v_0;s_1,\dots,s_{i-1},s_{i+1},s_i,s_{i+2},\dots,s_\ell)$ via a single substitution. That is, permuting the generators $s_1,\ldots,s_\ell$ has no effect on the homotopy equivalence class of the walk $(v_0;s_1,\ldots,s_\ell)$.
Additionally, if $s_{i+1}=-s_i$ then $(v_0;s_1,\dots,s_\ell)$ is homotopy equivalent to $(v_0;s_1,\ldots,s_{i-1},s_{i+2},\dots,s_\ell)$. Because we may freely reorder the generators, it follows that homotopy equivalence is preserved by deleting any two generators satisfying $s_i=-s_j$.

Let $\phi:R[S]\to\pi_1(\Cay(G,S),0)$ be the map that takes $a_1\cdot s_1+\cdots+a_k\cdot s_k\in R[S]$ to $[0;\pm s_1,\ldots,\pm s_1,\ldots,\pm s_k,\ldots,\pm s_k]$, which features either $s_i$ repeated $a_i$ times (if $a_i>0$) or $-s_i$ repeated $-a_i$ times (if $a_i<0$). We claim that $\phi$ is a group homomorphism. If $R_1=a_1\cdot s_1+\cdots+a_k\cdot s_k$ and $R_2=b_1\cdot s_1+\cdots +b_k\cdot s_k$ are elements of $R[S]$ then $\phi(R_1)*\phi(R_2)$ takes the form
$$\phi(R_1)*\phi(R_2)=[0;\pm s_1,\ldots,\pm s_1,\pm s_k\ldots,\pm s_k,\pm s_1,\ldots,\pm s_1,\pm s_k\ldots,\pm s_k].$$
By reordering the generators and canceling pairs of the form $(s_i,-s_i)$, we may write
\[
\phi(R_1)*\phi(R_2)=[0;\pm s_1,\ldots,\pm s_1,\ldots,\pm s_k,\ldots,\pm s_k],
\]
featuring $|a_i+b_i|$ copies of either $s_i$ (if $a_i+b_i>0$) or $-s_i$ (if $a_i+b_i<0$). This is exactly $\phi(R_1+R_2)$.

Additionally, $\phi$ is surjective, as $[0;s_1,\ldots,s_\ell]$ is the image of $1\cdot s_1+\cdots+1\cdot s_\ell$. Thus, $\pi_1(\Cay(G,S),0)\cong R[S]/\ker\phi$. It holds that
\[
\ker\phi=\{(1\cdot s_1+\cdots+1\cdot s_\ell)-(1\cdot s'_1+\cdots+1\cdot s'_m) \mid [0;s_1,\ldots,s_\ell]=[0;s'_1,\ldots,s'_m]\}
\]
is generated by differences of the form $(1\cdot s_1+\cdots +1\cdot s_\ell)-(1\cdot s'_1+\cdots+1\cdot s'_m)$ where $W=(0;s_1,\ldots,s_\ell)$ and $W'=(0;s'_1,\ldots,s'_m)$ differ by a single insertion, deletion, or substitution. If $W$ and $W'$ differ by a single insertion or deletion, the resulting difference is of the form $\pm(1\cdot s+1\cdot (-s))$. If $W$ and $W'$ differ by a single substitution, the resulting difference is of the form $1\cdot s_i+1\cdot s_{i+1}-1\cdot s'_i-1\cdot s'_{i+1}$, where these four generators satisfy $s_i+s_{i+1}=s'_i+s'_{i+1}$.
These are exactly the generators of $H[S]$, so $\ker\phi=H[S]$ and $\pi_1(\Cay(G,S),0)\cong R[S]/\ker\phi\cong R[S]/H[S]$.
\end{proof}

\begin{remark}
Let $R^\mathrm{EVEN}[S]\subset R[S]$ be the subgroup of all relations $a_1\cdot s_1\cdots a_k\cdot s_k\in R$ such that $a_1+\cdots+a_k$ is even. The same logic implies that $\pieven(\N(\Cay(G,S)))\cong R^{\mathrm{EVEN}}[S]/H[S]$. By \cref{prop:pi1even}, this yields a formula for the fundamental group of the neighborhood complex.
\end{remark}

\subsection{Winding number}
We now introduce a discrete notion of winding number for closed walks, similar to that for loops in topological spaces.  Let $X$ be a graph, together with a proper 3-coloring $c:V(X)\to \Z/3\Z$, which we may regard as a graph homomorphism $X\to K_3=\Cay(\mathbb{Z}/3\mathbb{Z},\{\pm 1\})$. Informally, the quantity we are about to define counts the net number of times the image of a closed walk $W$ in $X$ winds around the triangle $\Cay(\mathbb{Z}/3\mathbb{Z},\{\pm 1\})$.  We remark that analogues of the winding number have been used to lower-bound chromatic numbers of specific graphs in several contexts (see e.g.\ \cite{Arc,GuNaTa, Moh, Pay,Sim}).

\begin{definition}\label{def:wind}
Let $X$ be a graph and let $c:V(X)\to\mathbb{Z}/3\mathbb{Z}$ be a proper 3-coloring of $X$---that is, $c$ induces a graph homomorphism $X\to\Cay(\mathbb{Z}/3\mathbb{Z},\{\pm 1\})$. Let $W=v_0\cdots v_{\ell-1}v_0$ be a closed walk in $X$, and set $v_\ell:=v_0$ for convenience. The \emph{discrete winding number} of $W$ with respect to $c$, denoted $\wind(W,c)$, is defined to be $1/3$ of the following quantity: the number $a$ of indices $i=0,\dots,{\ell-1}$ such that $c(v_{i+1})-c(v_i)=1$ minus the number $b$ of indices $i=0,\dots,{\ell-1}$ such that $c(v_{i+1})-c(v_i)=-1$.
\end{definition}

Observe that $\wind(W,c)$ is always an integer, as $a-b\equiv\sum_{i=0}^{\ell-1} c(v_i)-c(v_{i+1})= 0\pmod 3$ because $W$ is a closed walk. 
We now make a crucial observation that winding number is invariant under homotopy equivalence.

\begin{proposition}\label{prop:winding-homotopy}
Let $X$ be a graph, and let $c$ be a $3$-coloring of $X$, with notation as above. If $W_1$ and $W_2$ are homotopy-equivalent walks in $X$, then $\wind(W_1,c)=\wind(W_2,c)$.
\end{proposition}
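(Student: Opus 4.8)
The plan is to reduce to the three elementary moves --- substitution, insertion, deletion --- that generate homotopy equivalence, and to check that each of them preserves $\wind(\cdot,c)$. The key preliminary step is to rewrite the winding number in a form that makes the local nature of these moves transparent. For a step $v_iv_{i+1}$ of a closed walk, the element $c(v_{i+1})-c(v_i)\in\mathbb Z/3\mathbb Z$ is nonzero because $c$ is a proper coloring, so it has a unique representative $\delta_i\in\{+1,-1\}$. Comparing with \cref{def:wind}, the number $a$ of steps with $c(v_{i+1})-c(v_i)=1$ minus the number $b$ of steps with $c(v_{i+1})-c(v_i)=-1$ is exactly $\sum_i\delta_i$, so $\wind(W,c)=\tfrac13\sum_i\delta_i$. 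It therefore suffices to show that each elementary move leaves $\sum_i\delta_i$ unchanged; note in passing that each move fixes the endpoints of a walk and hence sends closed walks to closed walks, so this quantity is always defined where we need it.

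For an insertion, which replaces an occurrence of $v_i$ by $v_iwv_i$ with $w\in N(v_i)$, the two new steps contribute $c(w)-c(v_i)$ and $c(v_i)-c(w)$ in $\mathbb Z/3\mathbb Z$; these are negatives of one another, so their $\{+1,-1\}$-representatives sum to $0$, and $\sum_i\delta_i$ is unchanged. A deletion is the exact reverse of an insertion (its hypothesis $v_{i-1}=v_{i+1}$ is what makes the deleted pair an inverse pair of steps), so it is handled identically.

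For a substitution, which replaces $v_{i-1}v_iv_{i+1}$ by $v_{i-1}v_i'v_{i+1}$ with $v_i'\in N(v_{i-1})\cap N(v_{i+1})$, the only steps that change are the two incident to the replaced vertex, so it is enough to show $\delta_{i-1}+\delta_i=\delta_{i-1}'+\delta_i'$, where $\delta_{i-1},\delta_i$ are the representatives of $c(v_i)-c(v_{i-1})$ and $c(v_{i+1})-c(v_i)$, and the primed versions are defined with $v_i'$ in place of $v_i$. The observation that drives this is that a sum of two elements of $\{+1,-1\}$ lies in $\{-2,0,2\}$, on which reduction modulo $3$ is injective; hence $\delta_{i-1}+\delta_i$ is determined by its residue mod $3$, which is $\big(c(v_i)-c(v_{i-1})\big)+\big(c(v_{i+1})-c(v_i)\big)=c(v_{i+1})-c(v_{i-1})$. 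Since this residue does not involve the middle vertex, the same value is forced for $v_i'$, giving $\delta_{i-1}+\delta_i=\delta_{i-1}'+\delta_i'$.

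Since homotopy equivalence is by definition generated by finite sequences of substitutions, insertions, and deletions, it follows that $\wind(\cdot,c)$ is constant on homotopy equivalence classes, proving the proposition. I do not expect a genuine obstacle here; the only points requiring care are the identity $a-b=\sum_i\delta_i$ and the verification that the substitution move alters only the two steps meeting the replaced vertex, both of which are routine bookkeeping.
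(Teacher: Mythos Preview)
Your proof is correct and follows the same overall strategy as the paper: reduce to the three elementary moves and verify each preserves the winding number. The only notable difference is in the substitution case, where the paper argues by cases on whether $c(v_i)=c(v_i')$ (and in the unequal case deduces $c(v_{i-1})=c(v_{i+1})$, forcing the two step-contributions to be $+1$ and $-1$ in some order), whereas you use the observation that $\{-2,0,2\}$ injects into $\mathbb Z/3\mathbb Z$ so that $\delta_{i-1}+\delta_i$ is determined by $c(v_{i+1})-c(v_{i-1})$; your version is a bit slicker and avoids the case split, but the two arguments are essentially equivalent.
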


\begin{proof}
It suffices to show that the discrete winding number is preserved under the substitution, insertion, and deletion steps described in \cref{def:homotopy}. Write $W_1=v_0\cdots v_{\ell-1}v_0$.

\textit{Substitution.} Suppose $W_2=v_0\cdots v_{i-1}v'_iv_{i+1}\cdots v_{\ell-1}v_0$ is obtained from $W_1$ by replacing $v_i$ with some common neighbor $v_i'$ of $v_{i-1}$ and $v_{i+1}$. If $c(v_i)=c(v'_i)$, then it is clear that $\wind(W_1,c)=\wind(W_2,c)$. If $c(v_i)\neq c(v'_i)$, then $c(v_{i-1})$ is uniquely determined from $c(v_i)$ and $c(v'_i)$, as $v_{i-1}$ cannot have the same color as either $v_i$ or $v'_i$. The color of $v_{i+1}$ is uniquely determined in an identical fashion, so it follows that $c(v_{i+1})=c(v_{i-1})$. In this case, $c(v_i)-c(v_{i-1})$ and $c(v_{i+1})- c(v_i)$ must be 1 and $-1$ in some order, and $c(v'_i)-c(v_{i-1})$ and $c(v_{i+1})-c(v'_i)$ must be $1$ and $-1$ in the opposite order. It follows that the values $a$ and $b$ in \cref{def:wind} are the same for both $W_1$ and $W_2$, and thus $\wind(W_1,c)=\wind(W_2,c)$.

\textit{Insertion.} Suppose $W_2=v_0\cdots v_iwv_i\cdots v_{\ell-1}v_0$ is obtained from $W_1$ by a single insertion of some $w\in N(v_i)$. Because $c$ is a proper coloring, we have $c(w)\neq c(v_i)$, and it follows that $c(v_i)-c(w)$ and $c(w)-c(v_i)$ are 1 and $-1$ in some order. Hence, the values $a$ and $b$ in \cref{def:wind} are incremented by 1 each by the insertion. Their difference remains unchanged, so $\wind(W_1,c)=\wind(W_2,c)$.

\textit{Deletion.} If $W_2$ is formed from $W_1$ by a single deletion then $W_1$ is formed from $W_2$ by an insertion. Thus, the argument from the previous case applies.
\end{proof}

\begin{remark}\label{remark:pushforward}
From \cref{prop:winding-homotopy}, it immediately follows that $\wind(-,c)$ gives a group homomorphism $\pi_1(X)\to\Z$. Indeed, if $W_1$ and $W_2$ are two closed walks with the same base vertex $v_0$, then it is clear that $\wind(W_1*W_2,c)=\wind(W_1,c)+\wind(W_2,c)$.

An alternate perspective on this result is as follows. If $c:X\to K_3$ is a graph homomorphism (representing a proper 3-coloring of $X$), then $c$ induces a natural group homomorphism $c_*:\pi_1(X)\to\pi_1(K_3)$. A straightforward computation shows $\pi_1(K_3)\cong\Z$ and indeed, one can check that $c_*$ and $\wind(-,c)$ are the same group homomorphism (up to automorphism of $\Z$).
\end{remark}

\section{Obstructions to $3$-colorability}\label{sec:obstructions}

The key result of this section is \cref{thm:torsion-implies-chi4}, showing that if the discrete fundamental group of a connected non-bipartite graph $X$ is torsion, then $\chi(X)\geq 4$. We derive \cref{thm:chi3iff} as an immediate corollary.
\cref{thm:torsion-implies-chi4} also implies \cref{cor:pi1-torsion} (which we recall was a special case of \cref{thm:homology}) directly. This section concludes with a proof of the full statement of \cref{thm:homology}, whcih requires a little more abstract algebra, motivated by the ideas in the proof of \cref{cor:pi1-torsion}.

\begin{theorem}\label{thm:torsion-implies-chi4}
    Let $X$ be a connected non-bipartite graph. If $\pi_1(X)$ is torsion, then $\chi(X)\geq 4$.
\end{theorem}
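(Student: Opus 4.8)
The plan is to argue by contraposition: assuming $X$ is a connected non-bipartite graph with $\chi(X)\le 3$, I will produce a non-torsion element of $\pi_1(X)$ — in fact an element of infinite order. Since $X$ is non-bipartite, it contains an odd closed walk; fix such a closed walk $W_0$ based at some vertex $v_0$, say of odd length. Since $\chi(X)\le 3$ and $X$ has an edge, $\chi(X)=3$ (it cannot be $1$ or $2$ as $X$ is non-bipartite), so fix a proper $3$-coloring $c:V(X)\to\Z/3\Z$, viewed as a graph homomorphism $X\to K_3$. By \cref{remark:pushforward}, $\wind(-,c)$ is a well-defined group homomorphism $\pi_1(X,v_0)\to\Z$. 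The whole argument then reduces to exhibiting a closed walk whose winding number is nonzero, because any such walk represents an element of infinite order in $\pi_1(X)$ (its image under the homomorphism $\wind(-,c)$ generates a finite-index, hence infinite, subgroup of $\Z$), contradicting that $\pi_1(X)$ is torsion.

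The key step is therefore: \emph{an odd closed walk in $X$ has nonzero winding number with respect to any proper $3$-coloring $c$}. To see this, let $W=v_0v_1\cdots v_{\ell-1}v_0$ with $\ell$ odd, and set $v_\ell:=v_0$. Along each edge $v_iv_{i+1}$ the color difference $c(v_{i+1})-c(v_i)$ is either $+1$ or $-1$ in $\Z/3\Z$ (never $0$, since $c$ is proper). Let $a$ be the number of steps with difference $+1$ and $b$ the number with difference $-1$; then $a+b=\ell$ and, summing the differences around the closed walk, $a-b\equiv 0\pmod 3$, so $\wind(W,c)=(a-b)/3\in\Z$. Now observe $a-b\equiv a+b=\ell\pmod 2$, so $a-b$ is odd, hence $a-b\neq 0$, hence $\wind(W,c)\neq 0$. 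This parity observation is the crux of the whole theorem, and it is exactly the point where non-bipartiteness (the existence of an odd closed walk) is used.

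Putting the pieces together: the odd closed walk $W$ based at $v_0$ defines $[W]\in\pi_1(X,v_0)$ with $\wind([W],c)=(a-b)/3\neq 0$. For every $n\geq 1$ we have $\wind([W]^n,c)=n\cdot\wind([W],c)\neq 0$, so $[W]^n\neq[\text{constant walk}]$, i.e.\ $[W]$ has infinite order. Thus $\pi_1(X)$ is not torsion, completing the contrapositive and hence the theorem. I expect the main (indeed only) subtlety to be the parity bookkeeping in the displayed step above — making sure the reduction mod $3$ and the reduction mod $2$ are combined correctly to conclude $a-b$ is a nonzero multiple of $3$; everything else is a direct appeal to \cref{prop:winding-homotopy} and \cref{remark:pushforward}. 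One should also take a moment at the start to record why $\chi(X)=3$ rather than $\le 2$, so that a genuine $3$-coloring $c$ exists to run the winding-number argument; this is immediate from non-bipartiteness.
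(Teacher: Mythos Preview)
Your proof is correct and follows essentially the same approach as the paper's: both hinge on the parity observation that an odd closed walk has nonzero winding number (indeed, $a-b\equiv a+b\equiv \ell\pmod 2$), then use that $\wind(-,c)$ is a homomorphism to $\Z$ to contradict torsion. The only cosmetic differences are that the paper argues by contradiction rather than contraposition and records that $\wind(W,c)$ is odd rather than merely nonzero; your aside about pinning down $\chi(X)=3$ is unnecessary, since any graph with $\chi(X)\le 3$ admits a proper map to $\Z/3\Z$ regardless.
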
\begin{proof}Assume to the contrary that $X$ admits a proper 3-coloring $c:V(X)\to\Z/3\Z$. 
Because $X$ is non-bipartite, there is an odd-length closed walk $W=v_0\cdots v_{2\ell}v_0$ in $X$. We claim that $\wind(W,c)$ is odd. Indeed, by \cref{def:wind}, it holds that $3\times\wind(W,c)$ is the difference $a-b$ of two integers satisfying $a+b=2\ell+1$. Thus,
$$\wind(W,c)\equiv 3\times\wind(W,c)=a-b\equiv a+b\equiv 1\pmod 2.$$
Let $W^k$ denote the $k$-fold concatenation of $W$ with itself. We see that $\wind(W^k,c)=k\times\wind(W,c)\neq 0$ for any positive integer $k$. However, because $\pi_1(X)$ is torsion, $W^k$ is null-homotopic for some $k>0$.
\cref{prop:winding-homotopy} then implies that $\wind(W^k,c)=\wind(v_0,c)=0$, a contradiction.
\end{proof}

\begin{proof}[Proof of \cref{thm:chi3iff}]
Let $G$ be an abelian group, written using additive notation. If $G$ is not of exponent $1$, $2$, or $4$ then it admits a 3-chromatic Cayley graph by \cref{lem:3-chromatic}.
Now, suppose $\exp(G)$ divides 4. Let $S$ be a symmetric subset of $G$, and let $G_0$ be the subgroup of $G$ generated by $S$. Then, $\Cay(G,S)$ is a disjoint union of copies of the connected subgraph $X_0:=\Cay(G_0,S)$, so $\chi(\Cay(G,S))=\chi(X_0)$. By \cref{thm:torsion-implies-chi4}, it suffices to show that $\pi_1(X_0)$ is torsion.

\cref{prop:pi1-for-abelian-Cayley-graph} shows that $\pi_1(X_0)\cong R[S]/H[S]$, where $R[S]$ and $H[S]$ are as defined in \cref{sec:compute-pi1}. We show that for all $q\in R[S]$, it holds that $4q\in H[S]$. For any $s\in S$, we have $s+s=(-s)+(-s)$, so $2\cdot s-2\cdot (-s)\in H[S]$. Then,
\[4\cdot s=(2\cdot s-2\cdot(-s))+2(1\cdot s+1\cdot(-s))\in H[S].\]
Because $q$ is a linear combination of elements of $S$, we have $4q\in H[S]$.
Hence $\pi_1(X_0)$ is torsion and \cref{thm:torsion-implies-chi4} implies $\chi(\Cay(G,S))=\chi(X_0)\geq 4$ if $X_0$ is not bipartite.
\end{proof}

\begin{proof}[Proof of \cref{cor:pi1-torsion}]
Suppose $\pi_1(\N(X))$ is torsion. To show that $\chi(X)\geq 4$, it suffices by \cref{thm:torsion-implies-chi4} to show that $\pi_1(X)$ is torsion.

Let $W$ be a closed walk in $X$. Then $W*W$ is a closed walk of even length in $X$. By \cref{prop:pi1even}, as well as the fact that $\pi_1(\mathcal{N}(X))$ is torsion, the $k$-fold concatenation $(W*W)^k$ is null-homotopic for some positive integer $k$. In other words, $W^{2k}$ is null-homotopic, and it follows that $\pi_1(X)$ is torsion.
\end{proof}

To motivate \cref{thm:homology}, it is helpful to keep in mind the following alternate perspective on the proof of \cref{thm:torsion-implies-chi4}. As noted in \cref{remark:pushforward}, $\wind(-,c)$ gives a group homomorphism $\pi_1(X)\to\Z$. The proof of \cref{thm:torsion-implies-chi4} shows that $\wind(W,c)\neq 0$ for any odd-length walk $W$. We derive a contradiction by observing that if $\pi_1(X)$ is torsion, then the only group homomorphism $\pi_1(X)\to\Z$ is the 0 map, as the image of $\pi_1(X)$ in $\Z$ must be torsion.

The proof of \cref{thm:homology} is a diagram chase that builds on this idea.

\begin{proof}[Proof of \cref{thm:homology}]
Let $X$ be a connected non-bipartite graph, and suppose that $H_1(\N(X))$ is torsion. Suppose for the sake of contradiction that $X$ admits a proper 3-coloring $c:V(X)\to\Z/3\Z$.

Recall from \cref{prop:pi1even} that $\pi_1(\N(X))\cong \pieven(X)$ is an index-2 subgroup of $\pi_1(X)$ and from \cref{remark:pushforward} that $\wind(-,c)$ gives a group homomorphism $\pi_1(X)\to\Z$. As discussed in the introduction, $H_1(\N(X))$ is the abelianization of $\pi_1(\N(X))$ \cite[Theorem 2A.1]{Hat}. By the universal property of the abelianization, any group homomorphism $\pi_1(\N(X))\to \Z$ factors through $H_1(\N(X))$. Thus, we have the following commutative diagram of group homomorphisms.

\begin{tikzcd}
\pi_1(\N(X))\cong\pieven(X)\arrow{rd}\arrow[hookrightarrow]{r}&\pi_1(X)\arrow{rr}{\wind(-,c)}&&\Z
\\&H_1(\N(X))\cong\mathrm{Ab}(\pi_1(\N(X)))\arrow[]{rru}{}&
\end{tikzcd}

Because $H_1(\N(X))$ is torsion, the image of any group homomorphism $H_1(\N(X))\to\Z$ must also be torsion; it follows that the homomorphism $H_1(\N(X))\to\Z$ must be the 0 map. Thus, the composition $\pieven(X)\to H_1(\N(X))\to\Z$ is the 0 map.

However, if $W$ is an odd-length closed walk in $X$ then the argument in the proof of \cref{thm:torsion-implies-chi4} implies that $\wind(W,c)$ is odd, and thus $\wind(W*W,c)=2\times\wind(W,c)\neq 0$. Because $\wind(W*W,c)$ is in the image of the composition $\pieven(X)\to\pi_1(X)\to\Z$, this composition cannot be the 0 map. Thus, we have a contradiction, and it follows that $X$ is not 3-colorable.
\end{proof}

\section{Further Remarks}\label{sec:concluding}

\subsection{Infinite groups and the axiom of choice} For infinite groups, our proof of \cref{thm:chi3iff} is fundamentally reliant on the axiom of choice. Notably, \cref{lem:3-chromatic} relies upon the existence of a transversal set for the cyclic subgroup $\langle x\rangle$, which in turn uses the axiom of choice---see \cite{Ker}. The transversal set allows us to translate the coloring on $\langle x\rangle$ to cosets of $\langle x\rangle$. Additionally, we elide the distinction between 2-colorability and avoiding odd cycles, but the equivalence between these properties for infinite graphs also requires the axiom of choice. It would be interesting to know whether one can avoid the use of the axiom of choice here. See, for example, \cite{Payne} for a discussion of abelian Cayley graphs with ``ambiguous'' chromatic number, i.e., graphs for which the chromatic number depends on the axiomatic framework.

For finitely generated abelian groups $G$, our proof does not require the axiom of choice. Indeed, if $G_0$ is a subgroup of $G$, a transversal set for $G_0$ may be computed algorithmically, and it follows without the axiom of choice that the chromatic number of a disconnected Cayley graph over $G$ equals the maximum chromatic number of a connected component.  In this case, we may even impose the additional requirement that the Cayley graph is connected.

\begin{theorem}\label{thm:connected-Cayley}
    Let $G$ be a finitely generated abelian group. There exists a symmetric subset $S$ of $G$ such that $\Cay(G,S)$ is connected and 3-chromatic if and only if $\exp(G)\notin\{1,2,4\}$.
\end{theorem}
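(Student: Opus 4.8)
The plan is to handle the two implications separately. The implication ``a connected $3$-chromatic Cayley graph on $G$ exists $\Rightarrow\exp(G)\notin\{1,2,4\}$'' is immediate from \cref{thm:chi3iff}, since a connected $3$-chromatic Cayley graph on $G$ is in particular a $3$-chromatic Cayley graph on $G$. The content is the converse, so assume $G$ is finitely generated abelian with $\exp(G)\notin\{1,2,4\}$. The generating sets built in \cref{lem:3-chromatic} generate only a cyclic subgroup, so the goal is to enlarge them to generate all of $G$ without disturbing $3$-chromaticity; rather than extending a coloring, I would pull one back along a carefully chosen quotient map.

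\textbf{Step 1 (reduction to three model groups).} I claim there is a surjection $\phi\colon G\twoheadrightarrow Q$ where $Q$ is one of $\Z$, $\Z/p\Z$ for an odd prime $p$, or $\Z/8\Z$. If $G$ is infinite, the structure theorem gives $\Z$ as a direct summand, hence a quotient. If $G$ is finite and an odd prime $p$ divides $|G|$, then $pG\subsetneq G$, so $G/pG$ is a nonzero $\F_p$-vector space, which surjects onto $\Z/p\Z$. Otherwise $G$ is a finite $2$-group, so $\exp(G)=2^a$ with $a\ge 3$, and $G/8G$ has $\Z/8\Z$ as a direct summand (inspecting the invariant-factor decomposition), hence as a quotient. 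The relevance of these three groups is that \cref{lem:3-chromatic} exhibits for each a symmetric generating set $T_Q$ --- namely $\{\pm 1,\pm 2\}$, $\{\pm 1\}$, and $\{\pm 1,4\}$ --- with $\Cay(Q,T_Q)$ connected, non-bipartite, and $3$-colorable; fix a proper $3$-coloring $c_Q\colon Q\to\Z/3\Z$ of it.

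\textbf{Step 2 (the construction).} Pick $e\in G$ with $\phi(e)=1$, and pick generators $f_1,\dots,f_j$ of $\ker\phi$ --- finitely many, since $\ker\phi$ is a subgroup of a finitely generated abelian group. Put $f_i'=f_i+e$, so that $\phi(f_i')=1$ and $\{e,f_1',\dots,f_j'\}$ still generates $G$. Let $S=\{\pm e\}\cup\{\pm f_i':1\le i\le j\}$, adding in $\pm 2e$ when $Q=\Z$ and $\pm 4e$ when $Q=\Z/8\Z$. Then $S$ is symmetric, omits $0$, and generates $G$, so $\Cay(G,S)$ is connected; since $\phi(S)\subseteq T_Q$, the composite $c_Q\circ\phi$ is a proper $3$-coloring of $\Cay(G,S)$ (because $c_Q$ properly colors $\Cay(Q,T_Q)$). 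It remains to see $\Cay(G,S)$ is non-bipartite, giving $\chi=3$. If it were bipartite there would be a homomorphism $\chi\colon G\to\Z/2\Z$ with $\chi\equiv 1$ on $S$; then $\chi(f_i)=\chi(f_i')-\chi(e)=0$, so $\chi$ vanishes on $\ker\phi$ and factors as $\bar\chi\circ\phi$ with $\bar\chi(1)=1$. When $Q=\Z/p\Z$ this says $\bar\chi$ is a surjection $\Z/p\Z\to\Z/2\Z$, which cannot exist for $p$ odd; when $Q=\Z$ or $Q=\Z/8\Z$ it forces $1=\chi(2e)=\bar\chi(2)=0$ or $1=\chi(4e)=\bar\chi(4)=0$, respectively. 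In each case we reach a contradiction.

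The crux I anticipate is the non-bipartiteness, specifically the need for the auxiliary generators $\pm 2e$, $\pm 4e$: without them, reduction mod $2$ of $\phi$ would already properly $2$-color $\Cay(G,S)$, so the construction must obstruct every index-$2$ subgroup of $Q$. This is free when $Q=\Z/p\Z$ has odd order but must be forced when $Q\in\{\Z,\Z/8\Z\}$, which is the role of the extra element mapping to $2$ or $4$. A smaller technical point is justifying Step 1 --- in particular that a finite abelian $2$-group of exponent at least $8$ surjects onto $\Z/8\Z$ --- which, along with the finite generation of $\ker\phi$, is exactly where the finitely-generated hypothesis enters.
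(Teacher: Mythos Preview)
Your proof is correct, but it follows a genuinely different route from the paper's. The paper decomposes $G$ as a direct product $G_1\times\cdots\times G_m$ of cyclic groups, chooses generating sets $S_i\subset G_i$ so that $\Cay(G_1,S_1)$ is $3$-chromatic and the remaining $\Cay(G_i,S_i)$ are $2$- or $3$-chromatic, and then observes that $\Cay(G,\bigcup S_i)$ is the \emph{box product} of the $\Cay(G_i,S_i)$; connectivity and $\chi=3$ follow from standard facts about box products. Your argument instead fixes a single quotient $\phi\colon G\twoheadrightarrow Q$ with $Q\in\{\Z,\Z/p\Z,\Z/8\Z\}$, pulls back a $3$-coloring of $\Cay(Q,T_Q)$ along $\phi$, and rules out bipartiteness by showing any hypothetical $\chi\colon G\to\Z/2\Z$ would have to factor through $Q$ and then contradict $\chi(2e)=1$ or $\chi(4e)=1$ (or the oddness of $p$). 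Both approaches rest on the same small Cayley graphs from \cref{lem:3-chromatic}; the difference is purely in how one extends to all of $G$. The paper's argument is shorter once one quotes the box-product chromatic-number formula, and it gives a very explicit description of the resulting Cayley graph. Your argument is more self-contained (no external product formula needed) and arguably more uniform across the three cases; it also avoids the paper's separate $3$-coloring of $\Cay(\Z/2^k\Z,\{\pm1,2^{k-1}\})$ for $k>3$, since you always quotient down to $\Z/8\Z$. One small point you use implicitly is that a connected abelian Cayley graph is bipartite if and only if some homomorphism $G\to\Z/2\Z$ sends $S$ to $1$; this is standard and easy, but worth a sentence.
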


\begin{proof}
If $\exp(G)\in\{1,2,4\}$ then the conclusion follows from \cref{thm:chi3iff}.

Suppose  $\exp(G)\notin\{1,2,4\}$. Because $G$ is finitely generated, it holds that $G=G_1\times\cdots\times G_m$, where each $G_i$ is a nontrivial cyclic subgroup of $G$ whose order is either infinite or a prime power. Moreover, at least one subgroup---without loss of generality, $G_1$---must have order not equal to $1$, $2$, or $4$. We notate $G$ and its subgroups using additive notation. For each $i$, let $x_i$ be a generator of $G_i$.

Set $S_1$ equal to $\{\pm x_1,\pm 2x_1\}$ if $|G_1|$ is infinite, $\{\pm x_1\}$ if $|G_1|$ is odd, and $\{\pm x_1, 2^{k-1}x_1\}$ if $|G_1|=2^k$. In all cases, $X_1:=\Cay(G_1,S_1)$ is a connected non-bipartite graph. If $|G_1|$ is infinite or odd then $\chi(X_1)=3$ as discussed in the proof of \cref{lem:3-chromatic}. If $|G_1|=2^k$ then $k\geq 3$ because $|G_1|\notin\{1,2,4\}$. In this case, we may 3-color $X_1$ by coloring $x_1,\ldots,2^{k-2}x_1$ with red and blue alternatingly, $(2^{k-2}+1)x_1,\ldots,2^{k-1}x_1$ with green and red alternatingly, and $(2^{k-1}+1)x_1,\ldots,2^kx_1$ with blue and green alternatingly, as pictured in \Cref{fig:Z16}.
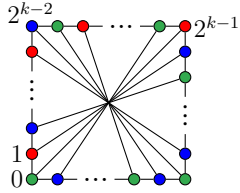
\begin{figure}[h]
\begin{tikzpicture}[rotate=45,scale=1.2,
	a/.style={vtx,draw=black,fill=red},
	b/.style={vtx,draw=black,fill=blue},
	c/.style={vtx,draw=black,fill=prettygreen}]

\path (0,0) coordinate[c] (d5)
	--++ (0.2,0.2) coordinate[a] (a1)
	--++ (0.2,0.2) coordinate[b]	 (a2)
	--++ (0.3,0.3) node[rotate=90] (a3) {\!...\!}
	--++ (0.3,0.3) coordinate[a] (a4)
	--++ (0.2,0.2) coordinate[b] (a5);
\path (a5)
	--++ (0.2,-0.2) coordinate[c] (b1)
	--++ (0.2,-0.2) coordinate[a] (b2)
	--++ (0.3,-0.3) node[rotate=0] (b3) {\!...\!}
	--++ (0.3,-0.3) coordinate[c] (b4)
	--++ (0.2,-0.2) coordinate[a] (b5);
\path (b5)
	--++ (-0.2,-0.2) coordinate[b] (c1)
	--++ (-0.2,-0.2) coordinate[c] (c2)
	--++ (-0.3,-0.3) node[rotate=90] (c3) {\!...\!}
	--++ (-0.3,-0.3) coordinate[b] (c4)
	--++ (-0.2,-0.2) coordinate[c] (c5);
\path (c5)
	--++ (-0.2,0.2) coordinate[b] (d1)
	--++ (-0.2,0.2) coordinate[c]	 (d2)
	--++ (-0.3,0.3) node[rotate=0] (d3) {\!...\!}
	--++ (-0.3,0.3) coordinate[b] (d4);
\draw (d5) -- (a1) -- (a2) -- (a3) -- (a4) -- (a5)
	-- (b1) -- (b2) -- (b3) -- (b4) -- (b5)
	-- (c1) -- (c2) -- (c3) -- (c4) -- (c5)
	-- (d1) -- (d2) -- (d3) -- (d4) -- (d5);
\foreach \i in {1,2,4,5} {
	\draw (a\i) -- (c\i) (b\i) -- (d\i);
}
\node[left,scale=0.8] at (d5) {$0$};
\node[left,scale=0.8] at (a1) {$1$};
\node[scale=0.8] at ($(a5)+(0.1,0.12)$) {$2^{k-2}$};
\node[right,scale=0.8] at (b5) {$2^{k-1}$};
\end{tikzpicture}

\caption{A 3-coloring of the graph $\Cay(\Z/2^k\Z,\{\pm 1,2^{m-1}\})$, which is isomorphic to $X_1$.}\label{fig:Z16}
\end{figure}

For $2\leq i\leq m$, set $S_i=\{\pm x_i\}$. Then $\Cay(G_i,S_i)$ is connected, with chromatic number $2$ if $|G_i|$ is infinite or even and chromatic number 3 if $|G_i|$ is odd. Setting $S=S_1\cup\cdots\cup S_m$, it holds that $\Cay(G,S)$ is isomorphic to the box product $\Cay(G_1,S_1)\squareop\cdots\squareop\Cay(G_m,S_m)$. (See, for example, \cite{Sab} for basic properties of box products of graphs.) As a box product of connected graphs, $\Cay(G,S)$ is connected. Moreover, $\chi(\Cay(G,S))$ equals the maximum of the chromatic numbers of the graphs $\Cay(G_{i},S_{i})$, which is $3$.
\end{proof}

\subsection{Generalizing \cref{thm:chi3iff}}

Our main theorem completely classifies all abelian groups that admit a $k$-chromatic Cayley graph for $k=3$. There is a similar classification for $k=2$: there exists a 2-chromatic Cayley graph over an abelian group $G$ if and only if $G$ contains an element of even or infinite order.  
\begin{question}
Is such a classification is possible for values of $k$ greater than $3$? 
\end{question}
 
 It would also be interesting to extend the classification in \cref{thm:chi3iff} to nonabelian groups $G$. It is not difficult to check that groups of exponent 1 or 2 are abelian; moreover \cref{lem:3-chromatic} implies that groups of exponent not dividing 4 admit 3-chromatic Cayley graphs. Thus, the only remaining case is Cayley graphs over nonabelian groups of exponent 4.
  
\begin{question}
Which nonabelian groups of exponent 4 admit a Cayley graph of chromatic number 3?
\end{question}

\subsection{Topological extensions}

Lov\'asz \cite{Lov} asked whether \cref{thm:lovasz} could be strengthened by considering homology groups instead of homotopy groups. It would be very interesting to find a strengthening of \cref{thm:lovasz}---using either homology or homotopy---that agrees with \cref{thm:homology} in the $k=1$ case.

\begin{problem}\label{problem:homology}
Suppose $X$ is a connected non-bipartite graph with $H_1(X)$ torsion. For $k\geq 2$, provide conditions on $H_2(\N(X)),\ldots,H_k(\N(X))$ (or on $\pi_2(\N(X)),\ldots,\pi_k(\N(X))$) which imply that $\chi(X)\geq k+3$.
\end{problem}

\cref{problem:homology} is additionally motivated by the study of chromatic numbers of binary Cayley graphs, i.e., Cayley graphs over the group $G=(\Z/2\Z)^m$. In 1992, Payan \cite{Pay} noted that many authors thought the chromatic number of binary Cayley graphs was always a power of 2. In that same article, he disproved this conjecture by constructing a binary Cayley graph with chromatic number $7$. Several other counterexamples \cite{Kok,Lau} to this conjecture have been given with chromatic number 13 or 14---these results are computational in nature. Conversely, in a 2014 online posting \cite{Roy}, Royle writes, ``I simply cannot find any cubelike graph with chromatic number 5, and according to Brouwer's website, this is unknown.''  To the authors' knowledge, it remains an open question as to whether a binary Cayley graph with chromatic number $5$ exists.

It seems plausible that an answer to \cref{problem:homology} would allow us to prove larger lower bounds on the chromatic numbers of binary Cayley graphs. In contrast, one would expect a binary Cayley graph with large chromatic number to have a large discrete fundamental group, and thus not be amenable to a direct application of \cref{thm:lovasz}.

\subsection{Limitations of topological techniques}\label{sub:failure}
We conclude with an example that highlights the limitations of our topological techniques to lower-bound chromatic numbers. We show that the converse of \cref{thm:torsion-implies-chi4} does not hold, and that in fact $\chi(X)$ is not always determined by $\pi_1(X)$. 

Fix $n\geq 9$ and let $X_n=\Cay(\Z/n\Z,\{\pm 1,\pm 2\})$; this graph is connected and non-bipartite. Moreover, in any proper 3-coloring of $X_n$, any two vertices $x$ and $x+3$ must receive the same color, because the triples $\{x,x+1,x+2\}$ and $\{x+1,x+2,x+3\}$ both induce subgraphs of $X_n$ isomorphic to $K_3$. If $n$ is not a multiple of 3, it follows that every vertex in $X_n$ must receive the same color, implying that $X_n$ is not 3-colorable.\footnote{In fact, $\chi(X_n)=4$, as one can show directly without too much trouble.  This is a special case of a general formula due to Heuberger \cite{Heu} for the chromatic number of a circulant graph with two generators.} However if $n$ is a multiple of 3 then $\chi(X_n)=3$, with the 3-coloring given by reducing modulo 3.

Applying \cref{prop:pi1-for-abelian-Cayley-graph} to compute $\pi_1(X_n)$ with $S=\{\pm 1,\pm 2\}$, one observes that
\[R[S]=\{a\cdot 1+b\cdot(-1)+c\cdot2+d\cdot-2\mid a,b,c,d\in\Z, a-b+2c-2d\equiv 0\ (\mathrm{mod}\ n)\}\]
and $H[S]=\{a\cdot 1+a\cdot(-1)+c\cdot 2+c\cdot(-2)\mid a,c\in\Z\}$. It follows that $\pi_1(X_n)\cong R[S]/H[S]\cong\Z\times\Z$, generated by $2\cdot 1-1\cdot 2$ and $k\cdot 1$. Thus, the discrete fundamental group cannot always be used to distinguish between 3-chromatic and 4-chromatic graphs $X_n$. It would be interesting to find a topological parameter that distinguishes the chromatic number in this example.

\subsection{Quadrangulations of surfaces}\label{sub:failure}

We remark that the results of this paper can be used to obtain information about the chromatic number of a quadrangulation $Q$ of a surface $Y$.  For we have that $\pi_1(Q)\cong\pi_1(Y)$.  Here $\pi_1(Q)$ refers to the discrete fundamental group discussed in this article, whereas $\pi_1(Y)$ is the ordinary topological fundamental group.  To see that $\pi_1(Q)\cong\pi_1(Y)$, note that (as discussed in \cite{matsushita} and \cite{wrochna}) for a graph $X$ we have that $\pi_1(X)$ is the fundamental group of the CW complex constructed by regarding vertices as $0$-cells, edges as $1$-cells, and $4$-cycles as $2$-cells, with the natural attaching maps.

In \cite{You}, Youngs proves that no quadrangulation of the real projective plane $\mathbb{R}P^2$ is $3$-chromatic.  Youngs' theorem follows immediately from \cref{cor:pi1-torsion}, because $\pi_1(\mathbb{R}P^2)\cong\mathbb{Z}/2\mathbb{Z}$.

We also find that no quadrangulation of a compact orientable surface $Y$ of genus $\geq 2$ can be an abelian Cayley graph.  For in this case $\pi_1(Y)$ is nonabelian, but \cref{prop:pi1-for-abelian-Cayley-graph} shows that the discrete fundamental group of an abelian Cayley graph is abelian.

\section*{Acknowledgments}

We thank the reviewers for many helpful comments.  We also thank Mark Ellingham and Florian Frick for independently pointing out that Youngs' theorem follows as a consequence of \cref{cor:pi1-torsion}.


\end{document}